\newtheorem{defi}{Definition}[section]
\newtheorem{sinobservacion}[defi]{}
\newenvironment{sinob}{\begin{sinobservacion} \rm}{\end{sinobservacion} }
\newtheorem{coro}[defi]{Corollary}
\newtheorem{lema}[defi]{Lemma}
\newtheorem{nota}[defi]{Notation}
\newtheorem{obs}[defi]{Remark}
\newtheorem{prop}[defi]{Proposition}
\newtheorem{teo}[defi]{Theorem}
\newtheorem{ej}[defi]{Example}
\newcommand{\benu}{\begin{enumerate}}
\newcommand{\enu}{\end{enumerate}}
\newcommand{\al}{\alpha}
\newcommand{\mbmA}{\mbox{mod}\,A}
\newcommand{\emmA}{\emph{mod}\,A}
\newcommand{\mbiA}{\mbox{ind}\,A}
\newcommand{\mbmB}{\mbox{mod}\,B}
\newcommand{\emmB}{\emph{mod}\,B}
\newcommand{\mbeT}{\mbox{End}_AT}
\newcommand{\emeT}{\emph{End}_AT}
\newcommand{\Gaa}{\Gamma_A}
\newcommand{\fle}{\rightarrow}
\newcommand{\flechad}{\rightarrow}
\newcommand{\caminod}{\rightsquigarrow}
\newcommand{\F}{\mathcal{F}}
\newcommand{\T}{\mathcal{T}}
\newcommand{\X}{\mathcal{X}}
\newcommand{\Y}{\mathcal{Y}}
\newcommand{\til}{\widetilde}
\newcommand{\phii}{\varphi}
\begin{document}
\title[On the radical of a module category]
{On the radical of the module category of an endomorphism algebra}
\author[Chaio Claudia]{Claudia Chaio}
\address{Centro marplatense de Investigaciones Matem\'aticas, Facultad de Ciencias Exactas y
Naturales, Funes 3350, Universidad Nacional de Mar del Plata, 7600 Mar del
Plata, Argentina}
\email{claudia.chaio@gmail.com}

\author[Guazzelli Victoria]{Victoria Guazzelli}
\address{Centro marplatense de Investigaciones Matem\'aticas, Facultad de Ciencias Exactas y
Naturales, Funes 3350, Universidad Nacional de Mar del Plata, 7600 Mar del
Plata, Argentina}
\email{victoria.guazzelli@hotmail.com}
\keywords{Irreducible morphisms, Degrees, Radical, tilting module.}
\subjclass[2000]{16G70, 16G20, 16E10}
\maketitle

\begin{abstract}
Given a finite dimensional algebra $A$ over
an algebraically closed field we study the relationship between the powers
of the radical of a morphism in the module category of the algebra $A$ and the induced morphism in
the module category of the endomorphism algebra of a tilting $A$-module.
We compare the nilpotency indices  of the radical of the mentioned module categories.
We find an upper bound for the nilpotency index of the radical of the module category of
iterated tilted algebras of Dynkin type.
\end{abstract}

\section*{Introduction}
Let $A$ be an a finite dimensional $k$-algebra over an algebraically closed field $k$, and $\mbox{mod}\;A$ the
category of finitely generated left $A$-modules. For $X,Y \in \mbox{mod}\;A$,
we denote by $\Re(X,Y)$ the set of
all morphisms $f: X \rightarrow Y$ such that, for all
indecomposable $A$-module $M$, each pair of morphisms $h:M
\rightarrow X$ and $h':Y\rightarrow M$ the composition $h'fh$ is not an isomorphism.
Inductively, the powers of
$\Re(X,Y)$ are defined. By $\Re_A^\infty(X,Y)$
we denote the intersection of all powers $\Re_A^i(X,Y)$ of $\Re_A(X,Y)$, with $i \geq 1$.

An important research direction towards understanding the structure of a module category is the study of the compositions of irreducible morphisms
in relation with the powers of the radical of their module categories, see for example \cite{C3}, \cite{CLT} and \cite{ChaTre}.

In case we deal with a representation-finite algebra, it is well-known
that there is a positive integer $n$ such that $\Re^{n}(\mbox{mod}\,A)=0$, see \cite[p. 183]{ARS}.
The minimal lower bound $m \geq 1$ such that $\Re^{m}(\mbox{mod}\;A)$ vanishes
is called the nilpotency index of $\Re(\mbmA)$.

In \cite{C}, the first named author determined the
nilpotency index of $\Re(\mbmA)$  in terms of the left and right degrees of same particular irreducible morphisms.
The notion of degree of an irreducible morphisms was introduced by S. Liu in \cite{L}.
In this work, we improve such  a result. More precisely, we reduce the steps to compute such a bound in case we deal with algebras such that all vertices in the ordinary quiver are neither sinks nor sources, see Theorem \ref{nilpo}.

One of the aims of  this article is to study the relationship between the power of the radical
of the morphisms in $\mbmA$ and the  induced morphisms in $\mbmB$, where $B$
is the endomorphism algebra of some tilting $A$-modules.
To achieve to this result, we strongly use the well-known tilting theorem due to Brenner and Butler.

We also compare the nilpotency indices of the radical of the module
category of an algebra and of the endomorphism algebra of some specific tilting $A$-modules.
Precisely, we prove the following result.
\vspace{.05in}

{\bf Theorem A}. {\it
Let $A\simeq kQ_A/I_A$ be a representation-finite  algebra, $T$ be a separating tilting $A$-module and
$B=\emeT$. Consider $(R_A)_0$ to be the set of all the vertices $u\in Q_A$
such that the length of any path from $P_u$ to $I_u$ going through $S_u$ is $r_A-1$, where $r_A$ is the nilpotency index of $\Re(\emmA)$ and $P_u$, $I_u$ and $S_u$ are the projective, the injective and the simple  corresponding to the vertex $u\in Q_A$,  respectively. If $B$ is representation-finite and for some $u\in ({R_A})_0$
we have that $P_u\in \emph{add}\,T$ then $r_A\leq r_B,$
where $ r_B$ is the nilpotency index of $\Re(\emmB)$.}
\vspace{.05in}

As a consequence we obtain information between the nilpotency index of the radical of the module
category of an algebra and of the endomorphism algebra of an APR-tilting module,
when both algebras are of finite representation type.

In \cite{Z},  D. Zacharia determined the nilpotency index of the radical of the module
category of any representation-finite hereditary algebra in terms of the number of vertices of their ordinary quiver.

For an iterated tilted algebra of Dynkin type, we  find an upper bound of
such an index. In many cases, this bound coincide with the nilpotency index, see Example \ref{ejemplo1}. More precisely, we  prove Theorem B.
\vspace{.05in}

{\bf Theorem B}. {\it Let $\Delta$ be a Dynkin quiver and $A$ be an iterated tilted algebra of type $\Delta.$
Consider $r_A$ the nilpotency index of $\Re_A(\emmA)$ and $\overline{\Delta}$ the underlying graph.
\begin{enumerate}
\item[(a)] If $\overline{\Delta}=A_n$, then $r_A\leq n$ for $n\geq 1$.
\item[(b)] If $\overline{\Delta}=D_n$, then  $r_A\leq 2n-3$ for $n\geq 4$.
\item[(c)] If $\overline{\Delta}=E_6$, then  $r_A\leq 11.$
\item[(d)] If $\overline{\Delta}=E_7$, then  $r_A\leq 17.$
\item[(e)] If $\overline{\Delta}=E_8$, then  $r_A\leq 29.$
\end{enumerate}}

\vspace{.05in}

This paper is organized as follows. In the first section, we state some notations and recall some preliminaries results.
Section 2 is dedicated to prove the results concerning
the comparison between morphisms in $\mbmA$ and the induced morphisms in $\mbmB$,
in relation with the power of the radical of their module categories. We prove Theorem A.
In Section 3, we
apply the results concerning the nilpotency index of the radical of the module
category of a representation-finite hereditary algebra to find an upper bound for nilpotency index of the radical of the module
category of a iterated tilted algebras of Dynkin type. Precisely, we prove Theorem B.
\vspace{.1in}

\thanks{Both authors thankfully acknowledge partial support from CONICET
and from Universidad Nacional de Mar del Plata, Argentina. The first author is a researcher from CONICET. }

\section{preliminaries}

Throughout this work, by an algebra we mean a finite dimensional basic $k$-algebra over an algebraically closed field,  $k$.

\begin{sinob} A {\it quiver} $Q$ is given by a set of vertices $Q_0$ and
a set of arrows $Q_1$, together with two
maps $s,e:Q_1\fle Q_0$. Given an arrow $\al\in Q_1$, we write $s(\al)$
the starting vertex of $\al$ and $e(\al)$
the ending vertex of $\al$.
If  $A$ is an algebra then
there exists a quiver $Q_A$, called the {\it ordinary quiver of}  $A$, such that
$A$ is the quotient of the path algebra $kQ_A$ by an admissible ideal.
\end{sinob}

\begin{sinob}
Let $A$ be an algebra. We denote by $\mbox{mod}\,A$ the category of finitely generated
left $A$-modules and  by $\mbiA$ the full subcategory of $\mbmA$ which consists of
one representative of each isomorphism class of indecomposable $A$-modules.

For any $A$-module $M$, we denote by $\mbox{add}\,M$  the full subcategory of $\mbmA$ whose objects are the direct sums of summands of $M$.

We denote by $\Gaa$ the Auslander-Reiten
quiver of $\mbmA$, by $\tau$ and $\tau^{-1}$ the Auslander-Reiten translation $\mbox{DTr}$ and $\mbox{TrD}$, respectively.
\end{sinob}

\begin{sinob}
Consider  $X,Y \in \mbox{mod}\,A$. The ideal $\Re(X,Y)$ is
the set of all the morphisms $f: X \rightarrow Y$ such that, for each
$M \in \mbox{ind}\,A$, each $h:M \rightarrow X$ and each $h^{\prime }:Y\rightarrow M$
the composition $h^{\prime }fh$ is not an isomorphism. For $n \geq 2$, the powers of $\Re(X,Y)$
are defined inductively. By $\Re^\infty(X,Y)$ we denote the intersection
of all powers $\Re^i(X,Y)$ of $\Re(X,Y)$, with $i \geq 1$.

Following \cite{CL}, we say that the depth of a morphism $f: M \rightarrow N$
in ${\rm mod}\,A$ is infinite if $f\in \Re^\infty(M, N)$; otherwise, the depth
of $f$ is the integer $n\geq 0$ for which $f\in \Re^n(M, N)$ but $f \notin \Re^{n+1}(M, N)$.
We denote the depth of $f$ by ${\rm dp}(f)$.

A morphism $f :X  \rightarrow  Y$, with $X,Y \in \mbox{ mod}\,A$,
is called {\it irreducible} provided it does not split
and whenever $f = gh$, then either $h$ is a split monomorphism or $g$ is a
split epimorphism.

By \cite{B}, it is known that a morphism $f :X  \rightarrow  Y$, with $X,Y \in \mbox{ind}\,A$,
is irreducible if and only if ${\rm dp}(f)=1$.

Next, we recall the definition of degree of an irreducible morphism given by S. Liu in \cite{L}, a notion fundamental for this work.

Let $f:X\rightarrow Y$ be an irreducible morphism in
$\mbox{mod}\,A$, with $X$ or $Y$ indecomposable. The {\it left degree} $d_l(f)$ of $f$ is infinite,
if for each integer $n\geq 1 $, each module $Z\in \mbox{ind}\,A$
and each morphism $g:Z  \rightarrow X$ with ${\rm dp}(g)=n$ we
have that $fg \notin \Re^{n+2}(Z,Y)$. Otherwise, the left degree of
$f$ is the least natural $m$ such that there is an $A$-module $Z$
and a morphism $g:Z  \rightarrow X$ with ${\rm dp}(g)=m$ such that
$ fg \in \Re^{m+2}(Z,Y)$.

The {\it right degree} $d_r(f)$ of an irreducible morphism $f$
is dually defined.

We say that an algebra A is {\it representation-finite} if there is only a finite number
of isomorphisms classes of indecomposable A-modules.  It is well-known that an algebra $A$ is representation-finite
if and only if there is a positive integer $n$ such that $\Re^n(X,Y)=0$ for each $A$-module $X$ and $Y$.
The minimal lower bound $m$ such that $\Re^m(\mbmA)=0$ is called the \textit{nilpotency index} of
$\Re(\mbmA)$. We denote it by $r_A$.

If $A$ is representation-finite, by \cite{CLT} we know  that all
irreducible epimorphisms and all irreducible monomorphisms are of
finite left and right degree, respectively.
We denote by $P_a$, $I_a$ and  $S_a$ the projective, the injective and the simple $A$-module corresponding to the
vertex $a$ in $Q_A$, respectively.

In \cite{C}, the first named author, find the nilpotency index of a representation-finite algebra in terms of degrees of some irreducible morphisms and also in terms of length of paths. Precisely, the author proved the following result, fundamental for our purposes.

If either
$P_a=S_a$ or $I_a=S_a$ then we write $n_a=0$ or $m_a=0$,
respectively. Otherwise, we consider the irreducible morphisms
$\iota_a: \mbox{rad} (P_a)\hookrightarrow P_a$ and $g_a:I_a
\longrightarrow I_a/\mbox{soc}(I_a)$ and we write
$n_a=d_r(\iota_a)$ and
$m_a=d_l(g_a)$. We write $r_a=n_a+m_a$.

\noindent\begin{teo}\emph{({\cite[ Theorem A]{C}.})}  Let $A\simeq kQ_A/I_A$ be a finite
dimensional algebra over an algebraically closed field and assume
that $A$ is of finite representation type. Then, the nilpotency index $r_A$ of the radical of the module category of $\emph{mod}\,A$ is
$\;\mbox{max}\; \{r_a\}_{a \in Q_0}+1$.
\end{teo}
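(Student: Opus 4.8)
The plan is to reduce the statement to the computation of the largest depth of a nonzero morphism between indecomposables, then to realize that value by an explicit morphism attached to each vertex and to check that nothing does better. Throughout I use that $A$ being representation-finite forces $\Re^{\infty}(\mathrm{mod}\,A)=0$. Since $\Re$ is an ideal of $\mathrm{mod}\,A$, one has $\Re^{n}(\mathrm{mod}\,A)=0$ if and only if $\Re^{n}(X,Y)=0$ for all $X,Y\in\mathrm{ind}\,A$, and, using $\Re^{\infty}(\mathrm{mod}\,A)=0$, this in turn holds exactly when there is no nonzero morphism between indecomposables of depth $\ge n$. Hence $r_{A}-1=\max\{\mathrm{dp}(f)\mid f\colon X\to Y\text{ nonzero},\ X,Y\in\mathrm{ind}\,A\}$, a finite number, and it remains to identify this maximum with $\max_{a}r_{a}$.

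For the inequality $r_{A}-1\ge\max_{a}r_{a}$, fix a vertex $a$ and consider the canonical morphism $\varphi_{a}=\epsilon_{a}\pi_{a}\colon P_{a}\twoheadrightarrow S_{a}\hookrightarrow I_{a}$, where $\pi_{a}\colon P_{a}\to\mathrm{top}\,P_{a}=S_{a}$ and $\epsilon_{a}\colon S_{a}=\mathrm{soc}\,I_{a}\to I_{a}$ are the canonical maps. Its image is $S_{a}$, so $\varphi_{a}\neq0$, and $\varphi_{a}$ is radical whenever $r_{a}>0$. Moreover $\mathrm{dp}(\varphi_{a})\ge\mathrm{dp}(\pi_{a})+\mathrm{dp}(\epsilon_{a})$ because $\Re^{i}\cdot\Re^{j}\subseteq\Re^{i+j}$, and $\mathrm{dp}(\pi_{a})\ge n_{a}$, $\mathrm{dp}(\epsilon_{a})\ge m_{a}$ directly from the definition of degree: the exact sequence $0\to\mathrm{rad}\,P_{a}\xrightarrow{\iota_{a}}P_{a}\xrightarrow{\pi_{a}}S_{a}\to0$ gives $\pi_{a}\iota_{a}=0\in\Re^{\mathrm{dp}(\pi_{a})+2}$, so $n_{a}=d_{r}(\iota_{a})\le\mathrm{dp}(\pi_{a})$, and dually $g_{a}\epsilon_{a}=0$ gives $m_{a}=d_{l}(g_{a})\le\mathrm{dp}(\epsilon_{a})$. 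Thus $\mathrm{dp}(\varphi_{a})\ge r_{a}$, so $\Re^{r_{a}}(\mathrm{mod}\,A)\neq0$ and $r_{A}\ge r_{a}+1$; taking the maximum over the vertices gives the inequality.

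For the reverse inequality, take a nonzero $f\colon M\to N$ between indecomposables with $d=\mathrm{dp}(f)$ maximal. If $M$ were not projective, a component $p\colon P_{b}\to M$ of its projective cover with $fp\neq0$ — one exists since the cover is an epimorphism — would belong to $\Re(P_{b},M)$, so $fp$ would be a nonzero morphism between indecomposables of depth $\ge d+1$, impossible; hence $M=P_{b}$ is projective and, dually, using a component $N\to I_{a}$ of the injective envelope, $N=I_{a}$ is injective. Now $f\iota_{b}$ and $g_{a}f$, if nonzero, would again produce a nonzero morphism between indecomposables of depth $\ge d+1$; so $f\iota_{b}=0$ and $g_{a}f=0$, and therefore $f$ factors through $\mathrm{top}\,P_{b}=S_{b}$ while $\mathrm{im}\,f\subseteq\mathrm{soc}\,I_{a}=S_{a}$. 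Since $f\neq0$, this forces $S_{a}=S_{b}$, and then $f$ is a scalar multiple of $\varphi_{a}$. Hence $d=\mathrm{dp}(\varphi_{a})$, and the additivity $\mathrm{dp}(\varphi_{a})=n_{a}+m_{a}$ yields $d\le\max_{a}r_{a}$. Combining the two inequalities gives $r_{A}=\max_{a}r_{a}+1$.

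The step I expect to be the main obstacle is the additivity $\mathrm{dp}(\varphi_{a})\le n_{a}+m_{a}$, equivalently that a longest nonzero path from a projective to an injective passing through the simple $S_{a}$ has length exactly $n_{a}+m_{a}$. The inequality $\ge$ is formal, as above, but excluding extra depth requires controlling, by means of the right degree of $\iota_{a}$ and the left degree of $g_{a}$, how the radical filtration of $\mathrm{mod}\,A$ behaves along a composite passing through the simple module $S_{a}$; this is exactly the type of estimate the degree machinery of \cite{L} and \cite{CLT} is meant to supply. Everything else in the argument is either formal or an instance of the standard factorization properties of the radical recorded in \cite{CL} and \cite{L}.
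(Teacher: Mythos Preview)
The present paper does not prove this theorem; it is quoted from \cite{C} and used as a tool. There is therefore no in-paper proof to compare against. What the paper \emph{does} record, immediately after the theorem, is Lemma~\ref{clau2.5} (from \cite[Lemma~2.4]{C}): every nonzero morphism $P_{a}\to I_{a}$ factoring through $S_{a}$ has depth exactly $r_{a}$. That lemma is precisely the identity $\mathrm{dp}(\varphi_{a})=n_{a}+m_{a}$ which you flag as the main obstacle, and it is indeed the heart of the matter.

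Your reduction is correct and clean: $r_{A}-1$ equals the maximum depth of a nonzero morphism between indecomposables, and any such morphism of maximal depth must go from an indecomposable projective $P_{a}$ to the injective $I_{a}$ and factor through $S_{a}$. (Two small points: when you show $f\iota_{b}=0$, you should say you restrict $\iota_{b}$ to an indecomposable summand of $\mathrm{rad}\,P_{b}$ to stay within morphisms between indecomposables; and likewise for $g_{a}f$. Both are easy.) The lower bound $\mathrm{dp}(\varphi_{a})\ge r_{a}$ via $d_{r}(\iota_{a})\le\mathrm{dp}(\pi_{a})$ and $d_{l}(g_{a})\le\mathrm{dp}(\epsilon_{a})$ is fine.

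The genuine gap, which you correctly locate, is the reverse inequality $\mathrm{dp}(\varphi_{a})\le n_{a}+m_{a}$. This is \emph{not} formal: one needs the results of \cite{L} and \cite{CLT} to the effect that, for $A$ representation-finite, the right degree of the irreducible monomorphism $\iota_{a}$ equals the depth of its cokernel $\pi_{a}$ (and dually for $g_{a}$ and $\epsilon_{a}$). Concretely, in the representation-finite setting the characterization of finite right degree in \cite{CLT} gives that $d_{r}(\iota_{a})$ is realized by a morphism whose composite with $\iota_{a}$ vanishes, and the only such morphisms factor through $\pi_{a}$; combining this with the fact that all compositions of irreducible morphisms along a nonzero path have honest depth one obtains $\mathrm{dp}(\pi_{a})=d_{r}(\iota_{a})$. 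This is exactly what is carried out in \cite{C} (see Lemma~2.4 and the preceding lemmas there). Without invoking that machinery your argument is a correct skeleton but not yet a proof.
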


A {\it path}  in $\mbmA$ is a sequence of non-zero non-isomorphisms between indecomposable $A$-modules.
In case that the morphisms involved in the above path are irreducible, we say that it is a path in $\mbox{ind}\; A$.

The {\it length} of a path $M_0\stackrel{f_{1}}\fle M_1\stackrel{f_{2}}\fle M_2\fle  \dots \fle M_{t-1}\stackrel{f_{t}}\fle M_t$,
between indecomposable $A$-modules is the number $t$ of irreducible morphisms in it.
Given $M, N\in \mbiA$, we denote a path in  $\mbox{ind}\; A$ from $M$ to $N$  by  the expression  $M\caminod N$.

By \cite[Remark 1]{C}, we know that $r_a$ is equal to the length of any path of irreducible
morphisms between indecomposable modules from the projective $P_a$ to the injective
$I_a$, going through the simple $S_a$. Therefore, in order to find the nilpotency index we have to find the path of maximum length between them.

For the convenience of the reader we recall a result from \cite{C} that we shall use frequently in this work.

\begin{lema}\label{clau2.5}\emph{\cite[Lema 2.4]{C}}
Let $A\cong kQ/I$
 be a representation-finite algebra.
 Given $a\in Q_{0}$, consider $r_a$ the number defined above.
Then,\begin{enumerate}
\item[(a)]
every nonzero morphism $f:P_{a}\rightarrow I_{a}$ that factors through the simple A-module $S_a$ is
such that $dp(f)={r_a}$.
\item[(b)] every non-zero morphism $f:P_{a}\rightarrow I_{a}$ which does not factor through the simple A-
module $S_a$ is such that $dp(f)={k}$, with $0\leq k<r_a$.
\end{enumerate}
\end{lema}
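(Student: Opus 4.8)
The plan is to reduce both assertions to the equality ${\rm dp}(\iota\pi)=r_a$ for the canonical morphisms $\pi\colon P_a\twoheadrightarrow P_a/\mbox{rad}\,P_a=S_a$ and $\iota\colon\mbox{soc}\,I_a=S_a\hookrightarrow I_a$, together with the determination of the top nonzero power of $\Re$ on $\mathrm{Hom}(P_a,I_a)$; these are then settled using S. Liu's degree theory and the nilpotency formula \cite[Theorem A]{C}. \emph{Reduction.} Since $\dim_k\mathrm{Hom}(P_a,S_a)=\dim_k\mathrm{Hom}(S_a,I_a)=1$, every nonzero morphism $P_a\to I_a$ factoring through a direct sum of copies of $S_a$ is a nonzero scalar multiple of $\iota\pi$; moreover, for a nonzero $f\colon P_a\to I_a$ we have $f\iota_a=0$ if and only if $f$ factors through $\mbox{coker}\,\iota_a=S_a$, if and only if $g_af=0$, so $f$ fails to factor through $S_a$ exactly when $f\iota_a\neq0$ (equivalently $g_af\neq0$). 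Since the depth is unchanged by multiplication with a nonzero scalar, (a) is equivalent to ${\rm dp}(\iota\pi)=r_a$, and, once (a) is established, (b) is equivalent to $\Re^{r_a}(P_a,I_a)=k\,\iota\pi$.

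\emph{Proof of (a).} For the lower bound: if ${\rm dp}(\pi)=d<n_a$, then $\pi\iota_a=0\in\Re^{d+2}(\mbox{rad}\,P_a,S_a)$ while ${\rm dp}(\pi)=d$, contradicting the minimality in the definition of $n_a=d_r(\iota_a)$; hence ${\rm dp}(\pi)\geq n_a$, and dually, from $g_a\iota=0$ and $m_a=d_l(g_a)$, ${\rm dp}(\iota)\geq m_a$. Since $\Re^i\Re^j\subseteq\Re^{i+j}$ this gives ${\rm dp}(\iota\pi)\geq{\rm dp}(\iota)+{\rm dp}(\pi)\geq n_a+m_a=r_a$. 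For the upper bound: as $A$ is representation-finite with $r_A=\max_b\{r_b\}+1$, we have $r_a+1\leq r_A$, hence $\Re^{r_a+1}(P_a,I_a)=0$ (this can also be obtained directly from the degree estimates below), so every nonzero morphism $P_a\to I_a$ has depth at most $r_a$; in particular ${\rm dp}(\iota\pi)=r_a$, which is (a). In particular $\iota\pi\in\Re^{r_a}(P_a,I_a)$, so $k\,\iota\pi\subseteq\Re^{r_a}(P_a,I_a)$.

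\emph{Proof of (b): the main obstacle.} By the reduction it remains to prove $\Re^{r_a}(P_a,I_a)\subseteq k\,\iota\pi$, i.e.\ that every $f\colon P_a\to I_a$ with ${\rm dp}(f)\geq r_a$ factors through $S_a$. Since $\Re^{r_a+1}(P_a,I_a)=0$, the standard description of radical powers in terms of compositions of irreducible morphisms expresses such an $f$ as a $k$-linear combination of composites of $r_a$ irreducible morphisms along paths $P_a\to\cdots\to I_a$; hence it suffices to show that \emph{every nonzero composite $\psi$ of $r_a$ irreducible morphisms $P_a\to\cdots\to I_a$ factors through $S_a$} --- then each such $\psi$, and therefore $f$, lies in $k\,\iota\pi$. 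Equivalently, $\mathrm{im}\,\psi=S_a$; note that $\mathrm{im}\,\psi$ has simple top and simple socle, both $S_a$, being a quotient of $P_a$ and a submodule of $I_a$. I expect this to be the hard step; it is in essence \cite[Lema 2.4]{C}. The way to attack it is again through degrees: if $\psi$ did not factor through $S_a$, then $\psi\iota_a\neq0$ and $g_a\psi\neq0$, and, tracking the depth along the path from its two ends --- using $d_r(\iota_a)=n_a$ for the portion near $P_a$, whose first ``collapse'' is precisely the one realised by $\pi$, and dually $d_l(g_a)=m_a$ for the portion near $I_a$ --- one bounds the length of the path by $n_a+m_a-1=r_a-1$, contradicting that it has length $r_a$. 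We refer to \cite[Lema 2.4]{C} for the remaining details; granting this, $\Re^{r_a}(P_a,I_a)=k\,\iota\pi$, which proves (b).
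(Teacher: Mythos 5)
The paper does not prove this lemma; it is recalled verbatim from \cite[Lemma 2.4]{C}, so there is no in-paper proof to compare against. Judged on its own, your proposal contains a genuine gap and a circularity at exactly the points where the real work lies.

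Your reduction is correct and nicely organized: since $\dim_k\mathrm{Hom}(P_a,S_a)=\dim_k\mathrm{Hom}(S_a,I_a)=1$, a nonzero $f\colon P_a\to I_a$ factors through $S_a$ if and only if $f\in k\,\iota\pi$, if and only if $f\iota_a=0$, if and only if $g_af=0$; and the lower bound ${\rm dp}(\iota\pi)\geq n_a+m_a$ is a direct and correct application of the definitions of $d_r(\iota_a)$ and $d_l(g_a)$ together with $\Re^i\Re^j\subseteq\Re^{i+j}$. But the two remaining steps --- the upper bound in (a) and the inclusion $\Re^{r_a}(P_a,I_a)\subseteq k\,\iota\pi$ in (b) --- are exactly the substance of the lemma, and neither is established. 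For the upper bound you write that $r_a+1\leq r_A$ forces $\Re^{r_a+1}(P_a,I_a)=0$: this is false as a deduction (a power of the radical below the nilpotency index need not vanish on a given Hom-space), and moreover it is circular, since the formula $r_A=\max_b r_b+1$ is \cite[Theorem A]{C}, whose proof in \cite{C} rests on this very Lemma 2.4. The parenthetical claim that the bound ``can also be obtained directly from the degree estimates below'' is not redeemed, because ``below'' only contains a sketch of (b) that itself defers to \cite{C}.

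For (b), after the (correct) reduction to showing that every nonzero composite of $r_a$ irreducible morphisms $P_a\rightsquigarrow I_a$ factors through $S_a$, you explicitly write ``We refer to \cite[Lema 2.4]{C} for the remaining details.'' Since the statement being proved \emph{is} \cite[Lema 2.4]{C}, this is not a proof but a citation. What is actually needed here is the degree-theoretic machinery of Liu and its refinements (\cite{L}, \cite{CLT}): one must track, along an arbitrary path of irreducible morphisms from $P_a$ to $I_a$, when the composite first enters a deeper power of the radical, and relate that to $d_r(\iota_a)$ at the projective end and $d_l(g_a)$ at the injective end, using structural results about finite left/right degree (e.g.\ that a finite right degree of $\iota_a$ is realized precisely by the first ``collapse'' and that this pins down the intermediate modules). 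Your sketch gestures at this correctly, but without those ingredients the argument does not close, and no elementary shortcut is visible. So the proposal correctly frames the problem but leaves the two hard steps unproved, one of them with a circular appeal to a theorem that depends on the lemma.
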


\end{sinob}

\begin{sinob} Let $\Gamma$ be a component of $\Gamma_A$. Following \cite{CPT}, we say that $\Gamma$ is a component {\it with length}
if parallel paths in $\Gamma$ have the same length. By parallel paths we mean paths in $\mbox{ind}\; A$  having the
same starting vertex and the same ending vertex.

Given a directed component $\Gamma$ of $\Gamma_A$, its {\it orbit graph}, denoted by $O(\Gamma )$, has as points the $\tau$ -orbits
$O(M)$ of the modules M in $\Gamma$ . There exists an edge between $O(M)$ and $O(N)$ in $O(\Gamma )$
if there are integer numbers $m$ and $n$  and an irreducible morphism $\tau^mM \fle \tau^n N$ or $\tau^nN \fle \tau^m M$.
Recall that if the orbit graph  $O(\Gamma)$ is of tree-type,
then $\Gamma$ is a simply connected translation quiver, and therefore, by \cite{BG},
$\Gamma$ is a component with length.

For general references on radical theory we refer the reader to \cite{ARS}.
\end{sinob}

\begin{sinob}
A pair $(\T,\F)$ of full subcategories of
$\mbox{mod}\;A$ is called a \textit{torsion theory} if the following
conditions are satisfied:\begin{enumerate}
\item[(a)] $\mbox{Hom}_{A}(M,N)=0$ for all $M\in \T$ y $N\in \F$.
\item[(b)] If $\mbox{Hom}_{A}(M,F)=0$ for all $F\in \F$, implies $M\in \T$.
\item[(c)] If $\mbox{Hom}_{A}(T,N)=0$ for all $T\in \T$, implies $N\in \F$.
\end{enumerate}

According to \cite{HR}, an A-module $T$ is called a tilting module if $\mbox{pd}\,T\leq 1$,
$\mbox{Ext}_{A}^{1}(T,T)=0$
and the number of isomorphism classes of
indecomposable summands of $T$ equals the number of vertices in $Q_A.$

Given $T$ a tilting
$A$-module, there exists a close connection between the representation theories
of $A$ and of $B=\mbox{End}_A(T)$.
It is well-known that a tilting $A$-module induces
a torsion pair $({\mathcal{T}}(T), {\mathcal{F}}(T))$ in $\mbox{mod}\,A$
and  a torsion pair $({\mathcal{X}}(T),{\mathcal{Y}}(T))$ in $\mbox{mod}\,B$.
More precisely, it was shown in \cite{BB} and \cite{HR} that if $T$ is a tilting module and
$B=\mbox{End}_{A}(T)$, then $T$ is a tilting $B$-module and $A\simeq \mbox{End}_{B}T$
and the functors $\mbox{Hom}_{A}(T,-)$ and $-\otimes _{B}T$ induce inverse equivalences between
the full subcategories ${\mathcal{T}}(T)=\{X:\mbox{Ext}_{A}^{1}(T,X)=0\}$ and
${\Y}(T)=\{Y:\mbox{Tor}^B_1(Y;T)=0\}$, while the functors
$\mbox{Ext}^{1}_{A}(T,-)$ and  $\mbox{Tor}_{1}^{B}(-,T)$ induce inverse equivalences between
the full subcategories ${\mathcal{F}}(T)=\{X: \mbox{Hom}_{A}(T,X)=0\}$ and
${\X}(T)=\{Y : Y\otimes _{B}T=0\}$. This last result is well-known as Brenner and Butler's theorem.

A tilting $A$-module $T$ is called \textit{separating} (\textit{splitting}, respectively)
 if, for each
indecomposable $A$-module $M$ ($B$-module $N$, respectively), we have either $M\in \T(T)$ or
$M\in \F(T)$ ($N\in \X(T)$ or
$N\in \Y(T)$, respectively).

An example, of a separating tilting $A$ module,
is provided by the so-called APR-tilting modules. Let $P_{a}$ be a simple projective
non-injective module (corresponding to the sink $a\in Q_0$).
Then, the module
$T[{a}]= \tau^{-1}(S_{a})\oplus (\oplus_{b\neq a}P_{b}) $
is
called the  \textit{APR-tilting module} associated to the vertex $a$.
Moreover, $\F(T[a])=\mbox{add}S_a$ and
$\T(T[a])=\mbox{add}(\mbiA\backslash S_a).$

In \cite{As}, the author proved that an APR-tilting $A$-module $T[a]$
is splitting if and only if the vertex $a$ is a free sink, that is,
$a$ is not an end point of a zero relation
on $Q_A$. Moreover,
the vertex of $Q_B$ corresponding to $a$ is a source.

For general references on tilting theory we refer the reader to \cite{As},  \cite{ASS} and \cite{HR}.
\end{sinob}

\begin{sinob} \label{uno-ocho}
Consider $H\cong kQ_H$ a representation-finite hereditary algebra,
that is, $Q_H$ is a Dynkin quiver.
We end up this section recalling the definition of an iterated tilting algebra of type $\Delta$.

Consider $\Delta$ a finite connected quiver without oriented cycles. An algebra $A$
is called an \textit{iterated tilting algebra of type $\Delta$} if there exist a sequence
of algebras $A=A_0,A_1,\dots , A_m=k\Delta$ and a sequence of
separating $A_i$-tilting modules $T^{(i)}$, for
$0\leq i<m$, such that $A_{i+1}=\mbox{End}_{A_i}T^{(i)}.$
In particular, if $m=1$ then $A$ is called a \textit{tilted algebra}.
\end{sinob}

\section{On the nilpotency index}

Throughout this work, by $r_A$ we denote the nilpotency index of the radical of the module category of a representation-finite algebra and by $r_B$ the nilpotency index of the radical of the module category of the  endomorphism algebra of a tilting $A$-module.

The aim of this section is to compare the nilpotency indices of the radical of the module category of a given algebra $A$ and the module category of the endomorphism algebra of a tilting $A$-module.

First, we establish the relationship between the powers
of the radical of a morphism in the module category of $A$ and the induced morphism in
the module category of the endomorphism algebra of a tilting $A$-module.

In the next result, we study the situation when $T$ is a separating tilting module. A dual result holds for
a splitting tilting module.

\begin{prop}\label{Tsep}
Let $A$ be an algebra, $T$ be a separating tilting $A$-module and
$B=\emph{End}_{A}(T)$. Let $M,N$ be indecomposable $A$-modules in $\T(T)$.
If  $f\in \Re_A^n(M,N)$ for some $n\geq 1$
then $F(f)\in \Re_B^n(F(M),F(N))$,
where  $F=\emph{Hom}_A(T,-)$.

In addition, if  $dp(F(f))=n$
then there is a non-zero path of irreducible morphisms between indecomposable $B$-modules in $\Y(T)$ from  $F(M)$ to $F(N)$ of length $n$.
\end{prop}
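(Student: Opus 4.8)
The plan is to exploit the Brenner–Butler theorem, which gives that $F = \mathrm{Hom}_A(T,-)$ restricts to an equivalence between the torsion class $\T(T)$ and the torsion-free class $\Y(T)$. For the first assertion I would argue by induction on $n$. Since $F$ is a functor and $f$ is not an isomorphism, $F(f)$ is not an isomorphism (the equivalence $\T(T)\simeq\Y(T)$ is faithful on these subcategories), so $F(f)\in\Re_B(F(M),F(N))$; this is the case $n=1$. Here I need the standard observation that for indecomposable $M,N$ in $\T(T)$ one has $\Re_A(M,N)=\mathrm{Hom}_A(M,N)$ precisely when there is no isomorphism, and likewise in $\Y(T)$, so the statement about the radical reduces to the statement about non-isomorphisms — and $F$ preserves isomorphisms and non-isomorphisms on $\T(T)$. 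For the inductive step, write $f = \sum g_i h_i$ with $h_i\in\Re_A(M,X_i)$, $g_i\in\Re_A^{n-1}(X_i,N)$, and $X_i$ indecomposable. Because $M,N\in\T(T)$ and $\T(T)$ is a torsion class (closed under quotients), and because a factorization through the radical can be taken through indecomposables, the intermediate modules $X_i$ that actually contribute may be assumed to lie in $\T(T)$ — any summand in $\F(T)$ would kill the corresponding term since $\mathrm{Hom}_A(X_i,N)$ with $X_i\in\F(T)$, $N\in\T(T)$ composed appropriately... more carefully, I would choose the factorization so that every $X_i\in\mathrm{add}\,\T(T)$, which is possible because one can factor through a minimal right almost split / source map and the relevant Auslander–Reiten components lie in $\T(T)$. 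Then apply $F$: $F(f)=\sum F(g_i)F(h_i)$ with $F(h_i)\in\Re_B(F(M),F(X_i))$ by the $n=1$ case and $F(g_i)\in\Re_B^{n-1}(F(X_i),F(N))$ by the inductive hypothesis, so $F(f)\in\Re_B^n(F(M),F(N))$.

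For the second assertion, suppose $\mathrm{dp}(F(f))=n$, i.e.\ $F(f)\in\Re_B^n$ but $F(f)\notin\Re_B^{n+1}$. By the characterization of depth-$n$ morphisms together with the fact that $F(M),F(N)$ are indecomposable, a morphism lying in $\Re_B^n\setminus\Re_B^{n+1}$ between indecomposables admits a factorization as a sum of paths of irreducible morphisms of length exactly $n$, at least one of which is non-zero; this is precisely the content of the results relating powers of the radical to compositions of irreducible morphisms (the paper cites \cite{CLT}, \cite{ChaTre} for exactly this kind of statement for representation-finite algebras, but in general one uses that a non-zero morphism of finite depth $n$ between indecomposables factors through a chain $F(M)=Y_0\to Y_1\to\cdots\to Y_n=F(N)$ of irreducible morphisms with the composite non-zero). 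It remains to check the intermediate modules $Y_j$ lie in $\Y(T)$. This follows because $F(M),F(N)\in\Y(T)$, $T$ is separating so $\mathrm{mod}\,B$ splits as $\X(T)\cup\Y(T)$ on indecomposables, $\Y(T)$ is a torsion-free class (closed under submodules), and $\mathrm{Hom}_B(\X(T),\Y(T))=0$: an irreducible morphism $Y_{j-1}\to Y_j$ in the chain with one endpoint in $\Y(T)$ forces the other into $\Y(T)$ as well, since a non-zero morphism from or to $\Y(T)$ with source/target in $\X(T)$ would contradict the $\mathrm{Hom}$-vanishing between the torsion pair (one uses here that the whole Auslander–Reiten component through $F(M)$ that carries the path stays in $\Y(T)$ because $T$ is separating). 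Walking along the chain from $Y_0\in\Y(T)$ shows every $Y_j\in\Y(T)$.

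The main obstacle I expect is the bookkeeping in the inductive step: ensuring that the factorization $f=\sum g_i h_i$ witnessing $f\in\Re_A^n$ can be chosen with all intermediate indecomposables in $\T(T)$ so that $F$ can be applied term by term. The cleanest route is to use the source map of $M$ (or, dually, sink map of $N$) inside the Auslander–Reiten quiver: since $M\in\T(T)$ and $\T(T)$ is closed under $\tau^{-1}$-type behaviour for separating tilting modules along the relevant arrows, the immediate successors of $M$ are again in $\T(T)$, and iterating gives the whole path in $\T(T)$; then the equivalence $F\colon\T(T)\to\Y(T)$ transports the path and, being an equivalence, sends irreducible morphisms between indecomposables in $\T(T)$ to irreducible morphisms between indecomposables in $\Y(T)$ and preserves non-vanishing of composites. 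A secondary subtlety is that $F$ need not be exact, so one must phrase everything in terms of $\mathrm{Hom}$-functoriality and the radical filtration rather than in terms of short exact sequences; this is handled by the depth/irreducible-path description of the radical powers, which is purely categorical.
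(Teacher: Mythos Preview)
Your inductive scheme for the first assertion matches the paper's, but the justification that the intermediate indecomposables $X_i$ lie in $\T(T)$ is off. You try to use ``$\mathrm{Hom}_A(X_i,N)=0$ for $X_i\in\F(T)$, $N\in\T(T)$'', which is the wrong direction: for a torsion pair one has $\mathrm{Hom}_A(\T(T),\F(T))=0$, not the reverse. The clean argument (the paper's) is simply: $M\in\T(T)$, each $f_i:M\to X_i$ may be assumed nonzero, and $T$ is separating, so $X_i\in\T(T)$ or $X_i\in\F(T)$; the second option forces $f_i=0$ by the correct Hom-vanishing. No source-map gymnastics are needed.

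The genuine gap is in the second assertion. You produce a path of irreducible morphisms $Y_0\to\cdots\to Y_n$ directly in $\mbox{mod}\,B$ and then try to argue each $Y_j\in\Y(T)$ by walking along the chain, invoking ``$T$ is separating so $\mbox{mod}\,B$ splits as $\X(T)\cup\Y(T)$ on indecomposables''. But that is precisely the definition of $T$ being \emph{splitting}, not separating; the hypothesis only gives that every indecomposable $A$-module lies in $\T(T)$ or $\F(T)$, and says nothing about indecomposable $B$-modules. So your chain in $\mbox{mod}\,B$ may pass through indecomposables lying in neither $\X(T)$ nor $\Y(T)$, and the Hom-vanishing $\mathrm{Hom}_B(\X(T),\Y(T))=0$ does not force them into $\Y(T)$. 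The paper avoids this by staying on the $A$-side: from $\mathrm{dp}(F(f))=n$ one first deduces $\mathrm{dp}(f)=n$ (by the first part), then uses the decomposition of $f$ in $\mbox{mod}\,A$ into sums of length-$n$ paths of irreducibles, all of whose intermediate modules lie in $\T(T)$ by the separating hypothesis. Applying $F$ to such a path gives $n$ morphisms each in $\Re_B$; if the composite has depth exactly $n$, none of the factors can lie in $\Re_B^2$, so each $F(h)$ is irreducible and the path sits in $\Y(T)$ by construction. That is the missing idea: build the path in $\T(T)$ and push it through $F$, rather than build it in $\mbox{mod}\,B$ and try to pull it back.
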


\begin{proof}
Let $M,N$  be indecomposable $A$-modules in $\T(T)$  and $f\in \Re_A^n(M,N)$, for some $n\geq 1$.
We prove by induction on $n$  that $F(f)\in \Re_B^n(F(M),F(N))$.

If $n=1$ then $f$  is not an isomorphism. Hence, by Brenner and Butler's theorem  it follows that $F(f)$ is not an
isomorphism in $\mbmB$. Therefore,  $F(f)\in \Re_B(F(M),F(N))$.

Now, consider $n\geq 2$. By \cite[Proposition 7.4]{ARS}, there exists a positive integer $s\geq 1$
and indecomposable $A$-modules $X_1, \dots, X_s$ such that
 $f=\Sigma_{i=1}^s g_if_i$, with  $f_i\in \Re_A(M,X_i)$
and $g_i=\Sigma_{k=1}^{t_i} h_i^k$, where each $h_i^k$
is a composition of $n-1$ irreducible morphisms between indecomposable $A$-modules,
for $1\leq i\leq s$, $1\leq k\leq t_i$ and $1\leq j\leq n$.
Therefore, $g_i\in \Re_{A}^{n-1}(X_i,N)$ for $1\leq i\leq s$. Without loss of generality, since $f\neq 0$,
we may assume that for all $i$, the morphisms $f_i$ and $g_i$ do not vanish.

Since $T$ is a separating tilting $A$-module, then all
the indecomposable modules $X_i$, with $1\leq i\leq s$,
belong to $\T(T)$ because $M \in \T(T)$ and $\mbox{Hom} (\T(T), \F(T))=0$. %there is not
%non-zero morphisms from $\T(T)$ to $\F(T)$.
Hence, by Brenner and Butler's theorem  we have that
$F(f)=\Sigma_{i=1}^s F({g}_i)F({f}_i)$, with  $F({f}_i)\in \Re_A(F({M}),F({X}_i))$. Then, by inductive hypothesis $F({g}_i)\in \Re_B^{n-1}(F({X_i}),F(N))$.
Therefore $F({f})\in \Re_B^n(F({M}),F({N}))$.

Now, assume that $F(f)\notin \Re_B^{n+1}(F(M),F(N))$. Thus $f\notin\Re_A^{n+1}(M,N)$.
If $n=1$ then $F(f)$ is irreducible and there is a path  from
$F(M)$ to $F(N)$ of length one between modules in $\Y(T)$.

If $n\geq 2$, since $dp(f)=n$,
then by \cite[Proposition 7.4]{ARS}, there exists a positive integer $s\geq 1$ such that
$f=\Sigma_{i=1}^s \Sigma_{k=1}^{t_i} h_i^kf_i$, where each $h_i^k=h_i^{k,n-1} \dots h_i^{k,1}$
is a composition of $n-1$ irreducible morphisms between indecomposable modules in $\T(T)$.
Then, $$F(f)=\Sigma_{i=1}^s \Sigma_{k=1}^{t_i} F(h_i^{k,n-1})\dots F(h_i^{k,1})F(f_i).$$
\noindent If all the summands of $F(f)$ belong to $\Re_B^{n+1}(F(M),F(N))$,
then $F(f) \in \Re_B^{n+1}(F(M),F(N))$,
getting a contradiction to our  assumption.
Hence, there exist at least an $i_0 \in \{1, \dots, s\}$ and a $k_0 \in \{1,  \dots, t_{i_0}\}$
such that $dp(F(h_{i_0}^{{k_0},n-1}) \dots F(h_{i_0}^{{k_0},1})F(f_{i_0}))=n$.
Thus, it is a composition of $n$ irreducible morphisms between indecomposable $B$-modules
in  $\Y(T)$, proving the result.
\end{proof}

The above result holds if we consider a morphism between indecomposable
modules in the free-torsion class. Bellow, we state the result.

\begin{prop}\label{Tsepdual}
Let $A$ be an algebra, $T$ be a separating tilting $A$-module and
$B=\emph{End}_{A}(T)$. Let $M,N$ be indecomposable $A$-modules in $\F(T)$.
If  $g\in \Re_A^n(M,N)$ for some $n\geq 1$
then $F'(g)\in \Re_B^n(F'(M),F'(N))$, where  $F'=\emph{Ext}^1_A(T,-)$.

In addition, if  $dp(F'(g))=n$
then there is a non-zero path of irreducible morphisms between indecomposable $B$-modules in $\X(T)$ from $F'(M)$ to $F'(N)$ of length $n$.
\end{prop}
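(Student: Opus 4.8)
The plan is to mirror exactly the proof of Proposition \ref{Tsep}, replacing the torsion class $\T(T)$ by the torsion-free class $\F(T)$, the functor $F=\mathrm{Hom}_A(T,-)$ by $F'=\mathrm{Ext}^1_A(T,-)$, and the subcategory $\Y(T)$ by $\X(T)$. The two facts from Brenner--Butler's theorem that do the work are: (i) $F'$ restricts to an equivalence between $\F(T)$ and $\X(T)$, in particular it is faithful and sends indecomposables to indecomposables and non-isomorphisms to non-isomorphisms; and (ii) since $T$ is separating, every indecomposable $A$-module is in $\T(T)$ or in $\F(T)$, and $\mathrm{Hom}_A(\T(T),\F(T))=0$.

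First I would handle the inductive claim that $g\in\Re_A^n(M,N)$ implies $F'(g)\in\Re_B^n(F'(M),F'(N))$ by induction on $n$. The base case $n=1$ is immediate: $g$ is a non-isomorphism between indecomposables, hence by (i) $F'(g)$ is a non-isomorphism between indecomposables, so $F'(g)\in\Re_B(F'(M),F'(N))$. For $n\geq 2$, apply \cite[Proposition 7.4]{ARS} to write $g=\sum_{i=1}^s g_if_i$ with $f_i\in\Re_A(M,X_i)$, $X_i$ indecomposable, and $g_i\in\Re_A^{n-1}(X_i,N)$, where we may assume all $f_i,g_i$ nonzero. The key point is that each $X_i$ lies in $\F(T)$: indeed $N\in\F(T)$ and a nonzero morphism $g_i:X_i\to N$ forces $X_i\notin\T(T)$ by $\mathrm{Hom}_A(\T(T),\F(T))=0$, hence $X_i\in\F(T)$ by the separating hypothesis. (Note this is the dual of the argument in Proposition \ref{Tsep}, where one pushes forward along $M$; here one pulls back along $N$.) Then $F'(g)=\sum_i F'(g_i)F'(f_i)$ with $F'(f_i)\in\Re_B(F'(M),F'(X_i))$ by the $n=1$ case and $F'(g_i)\in\Re_B^{n-1}(F'(X_i),F'(N))$ by the inductive hypothesis, so $F'(g)\in\Re_B^n(F'(M),F'(N))$.

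For the second assertion, suppose $\mathrm{dp}(F'(g))=n$, i.e.\ $F'(g)\notin\Re_B^{n+1}$; then $g\notin\Re_A^{n+1}(M,N)$ by the first part. If $n=1$, then $F'(g)$ is a non-isomorphism not in $\Re_B^2$, hence irreducible between modules of $\X(T)$, giving a path of length one. If $n\geq 2$, use \cite[Proposition 7.4]{ARS} again to write $g=\sum_{i,k} h_i^k f_i$ with each $h_i^k=h_i^{k,n-1}\cdots h_i^{k,1}$ a composition of $n-1$ irreducible morphisms between indecomposable modules, all of which lie in $\F(T)$ by the same separating argument applied along the chain. Applying $F'$ and using that if every summand were in $\Re_B^{n+1}$ then so would be $F'(g)$, there must be a summand $F'(h_{i_0}^{k_0,n-1})\cdots F'(h_{i_0}^{k_0,1})F'(f_{i_0})$ of depth exactly $n$; this exhibits a composition of $n$ irreducible morphisms between indecomposable $B$-modules in $\X(T)$, hence a nonzero path of length $n$ from $F'(M)$ to $F'(N)$.

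I do not anticipate a serious obstacle: the only point requiring care is that the exactness properties used in the $\mathrm{Hom}$-case must be replaced by the analogous ones for $\mathrm{Ext}^1_A(T,-)$ on $\F(T)$, which is precisely the content of Brenner--Butler's theorem as recalled in the preliminaries, so that $F'$ is additive and faithful on $\F(T)$ and commutes with the formation of sums of compositions. Thus the proof is verbatim dual to that of Proposition \ref{Tsep}, and I would simply write ``The proof is dual to that of Proposition \ref{Tsep}, using that $F'=\mathrm{Ext}^1_A(T,-)$ induces an equivalence between $\F(T)$ and $\X(T)$'' together with the explicit remark that, when decomposing $g$ via \cite[Proposition 7.4]{ARS}, the intermediate indecomposables stay in $\F(T)$ because $N\in\F(T)$ and $\mathrm{Hom}_A(\T(T),\F(T))=0$.
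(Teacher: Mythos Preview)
Your proposal is correct and matches the paper's approach: the paper does not write out a separate proof of Proposition \ref{Tsepdual} but merely states it as the analogue of Proposition \ref{Tsep} for the torsion-free class, which is precisely what you have spelled out. Your one genuinely new remark --- that the intermediate indecomposables $X_i$ are forced into $\F(T)$ by looking at the nonzero map $g_i:X_i\to N$ with $N\in\F(T)$ (rather than at $f_i:M\to X_i$ as in Proposition \ref{Tsep}) --- is exactly the dual twist needed, and everything else carries over verbatim via the Brenner--Butler equivalence $F'\colon \F(T)\to\X(T)$.
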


Our next examples show that the above result can not be improved taking into account the depth of the morphism.

\begin{ej} \label{eje1}
\emph{$(a)$ First, we show an example of  a morphism which does not belong to $\Re^2_A$  but where the induced morphism is in $\Re^2_B$.}

\emph{ Let $A$  be an algebra given by the quiver}

\begin{displaymath}
	\xymatrix  @R=0.3cm  @C=0.3cm {
		 1\ar[rr]^{\alpha}& & 2\ar[rr]^{\beta}&& 3\ar[rr]^{\gamma}&&4 &}
	\end{displaymath}

\noindent \emph{ with $I_A=<\gamma\beta\alpha>$. The Auslander-Reiten quiver of  $\mbmA$ is the following:}

\[
\includegraphics[width=11cm]{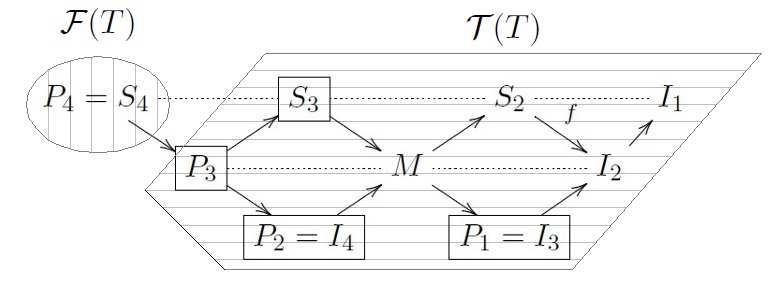}
\]

\noindent  \emph{where
$T=P_1\oplus P_2\oplus P_3\oplus S_3$
is a tilting $A$-module. The  endomorphism algebra  $B=\mbox{End}_AT$ is given by the bound quiver:}

{\begin{displaymath}
	\xymatrix  @R=0.4cm  @C=0.7cm {
		                   &&& &2'\ar[rd]^{\beta'}&&&\\
 &&& 1' \ar[rd]_{\alpha'}\ar[ru]^{\gamma'}\ar@{.}[rr]&&3'&&&&\\
 &&& &4'\ar[ru]_{\delta'}&&&}
	\end{displaymath}}

\noindent \emph{with  $I_B=<\beta'\gamma'-\delta'\alpha'>$. The Auslander-Reiten quiver of $\mbmB$ is the following}

\[
\includegraphics[width=12cm]{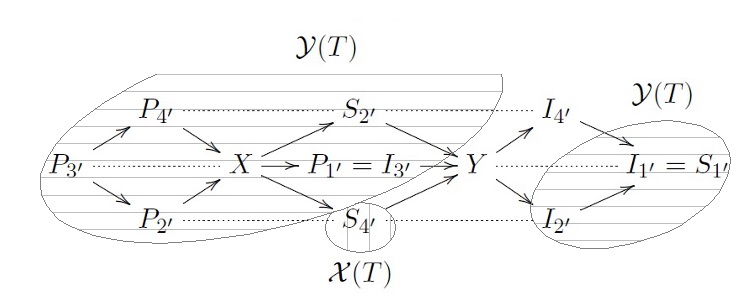}
\]

\noindent \emph{Observe that $T$ is a separating but not splitting tilting $A$-module.
The morphism $f:S_2\fle I_2$ does not belong to $\Re^2_A(S_2,I_2),$
while the induced morphism $\mbox{Hom}_A(T,f)$ is in $\Re^2_B(S_{2'},I_{2'})$.}

\emph{ $(b)$ Our next example shows a morphism in $\mbmA$ such that $dp(f)=t$ and where the induced morphism
belongs to $\Re^{\infty}_B$.}

\emph{Let $A$  be the algebra given by the quiver}

\begin{displaymath}
	\xymatrix  @R=0.1cm  @C=0.3cm {
&		&&2\ar[rrdd]^{\beta}&&&&\\
&&\ar@{.}@/_/[rr]&&&&&\\
&1\ar[rrdd]_{\delta}\ar[rruu]^{\alpha}&&&&4&&\\
&&\ar@{.}@/^/[rr]&&& &&\\
&&&3\ar[rruu]_{\gamma}&&&&}
	\end{displaymath}

\emph{\noindent with $I_A=<\beta\alpha,\gamma\delta>$. The Auslander-Reiten quiver of  $\mbmA$ is the following:}

\[
\includegraphics[width=12cm]{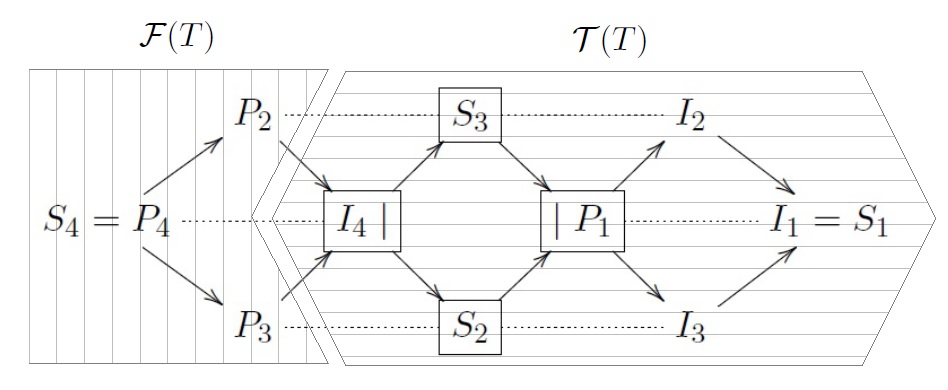}
\]

\noindent \emph{Consider the tilting module $T=I_4\oplus S_3 \oplus S_2 \oplus P_1$.
The endomorphisms algebra $B=\mbox{End}\,T$ is given by the quiver}

\begin{displaymath}
	\xymatrix  @R=0.1cm  @C=0.3cm {
&		&&2'\ar[rrdd]^{\beta}&&&&\\
&&&&&&&\\
&1'\ar[rrdd]_{\delta}\ar[rruu]^{\alpha}&&&&4'&&\\
&&&&& &&\\
&&&3'\ar[rruu]_{\gamma}&&&&}
	\end{displaymath}

\emph{Note that $B$ is a hereditary algebra of type $\til{A}_3$. Consider the morphism
$f:P_1\fle I_1$ in $\mbmA$. By \ref{clau2.5} we know that
$\mbox{dp}(f)=2$. On the other hand, if we applied to $f$ the functor $F=\mbox{Hom}_A(T,-)$, the morphism $F(f)$
belongs to  $\Re^{\infty}_B(F(P_1),F(I_1))$, since
$F(P_1)$ and $F(I_1)$ is a projective and an injective $B$-module, respectively,
in different Auslander-Reiten components of $\mbmB$.}
\end{ej}

As a consequence of Proposition \ref{Tsep} and Proposition \ref{Tsepdual}, we deduce the following corollary.

\begin{coro}\label{BBcompos}
Let $A$ be an algebra, $T$ be a separating and splitting tilting $A$-module and
$B=\emph{End}_{A}(T)$. Let $M,N$ be indecomposable $A$-modules in $\T(T)$ (or in $\F(T)$).
Then, for some $n\geq 1$,  $f\in \Re_A^n(M,N)$  if and only if
$F(f)\in \Re_B^n(F(M),F(N))$ (or $F'(f) \in \Re_B^n(F'(M), F'(N))$),
where  $F=\emph{Hom}_A(T,-)$ (\text{or}  $F'=\emph{Ext}_A^{1}(T,-))$.
\end{coro}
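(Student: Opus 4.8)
The plan is to deduce Corollary \ref{BBcompos} directly from the two preceding propositions together with the extra hypothesis that $T$ is splitting. I would treat the two cases ($M,N\in\T(T)$ with $F=\mathrm{Hom}_A(T,-)$, and $M,N\in\F(T)$ with $F'=\mathrm{Ext}^1_A(T,-)$) simultaneously, since they are formally dual; I will write out the argument for the torsion case and remark that the torsion-free case follows by the dual argument using Proposition \ref{Tsepdual}.

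First I would establish the forward implication: if $f\in\Re_A^n(M,N)$ then $F(f)\in\Re_B^n(F(M),F(N))$. This is exactly the content of the first assertion of Proposition \ref{Tsep}, so nothing new is needed here — I just cite it. The substance of the corollary is the converse. So suppose $F(f)\in\Re_B^n(F(M),F(N))$ for some $n\geq1$; I want $f\in\Re_A^n(M,N)$. The key point is that Brenner--Butler gives an equivalence of categories between $\T(T)$ and $\Y(T)$ via $F$, with quasi-inverse $G=-\otimes_B T$, so $G(F(f))\cong f$ under the natural isomorphisms $GF(M)\cong M$, $GF(N)\cong N$. Now $F(M),F(N)$ lie in $\Y(T)$, and because $T$ is \emph{splitting}, every indecomposable $B$-module lies in $\X(T)$ or in $\Y(T)$; moreover $\mathrm{Hom}_B(\Y(T),\X(T))=0$ (this is the analogue, on the $B$-side, of the Hom-vanishing between torsion and torsion-free classes, which holds for any tilting module). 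Hence any morphism $F(M)\to F(N)$ factoring through an indecomposable, when radical-decomposed as in \cite[Proposition 7.4]{ARS}, only involves indecomposable $B$-modules in $\Y(T)$: a summand $X$ appearing in a path from $F(M)$ to $F(N)$ receives a nonzero map from $F(M)\in\Y(T)$ and maps nonzero to $F(N)\in\Y(T)$, so $X\notin\X(T)$, hence $X\in\Y(T)$.

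Given that, I would run the same induction on $n$ as in the proof of Proposition \ref{Tsep}, but now for the functor $G$ restricted to $\Y(T)$. For $n=1$: $F(f)$ is a non-isomorphism in $\mathrm{mod}\,B$, so $f=GF(f)$ is a non-isomorphism in $\mathrm{mod}\,A$, i.e. $f\in\Re_A(M,N)$. For $n\geq2$: decompose $F(f)=\sum_i g_i f_i$ with $f_i\in\Re_B(F(M),X_i)$ and $g_i\in\Re_B^{n-1}(X_i,F(N))$, all nonzero, all $X_i$ indecomposable; by the paragraph above all $X_i\in\Y(T)$, so $X_i\cong F(Y_i)$ for indecomposable $Y_i\in\T(T)$. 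Applying $G$ and using that $G$ is an additive equivalence $\Y(T)\to\T(T)$, we get $f\cong\sum_i G(g_i)G(f_i)$ with $G(f_i)\in\Re_A(M,Y_i)$ and, by the inductive hypothesis applied to each $g_i$ (now thought of as $F$ of the morphism $G(g_i):Y_i\to N$, using $FG\cong\mathrm{id}$ on $\Y(T)$), $G(g_i)\in\Re_A^{n-1}(Y_i,N)$. Hence $f\in\Re_A^n(M,N)$, which closes the induction.

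The main obstacle, and the place I would be most careful, is the bookkeeping around the equivalence: I need that $F$ and $G$ are mutually quasi-inverse \emph{additive} equivalences between the full subcategories $\T(T)$ and $\Y(T)$, that they send indecomposables to indecomposables, and that they interact correctly with the radical — namely that an additive equivalence of subcategories preserves the powers of the radical relative to those subcategories. This last point needs the splitting hypothesis to guarantee that the radical filtration of a morphism $F(M)\to F(N)$ can be taken \emph{inside} $\Y(T)$ (otherwise an intermediate indecomposable could fall in $\X(T)$ and the equivalence would not see it). Once that is pinned down, everything else is the routine induction already carried out in Proposition \ref{Tsep}, read backwards through the equivalence; and the $\F(T)$ case is verbatim dual, replacing $F$ by $F'=\mathrm{Ext}^1_A(T,-)$, $\Y(T)$ by $\X(T)$, and invoking Proposition \ref{Tsepdual}.
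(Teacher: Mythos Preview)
Your argument is correct and is exactly the approach the paper has in mind: the paper presents the corollary as an immediate consequence of Propositions \ref{Tsep} and \ref{Tsepdual}, together with the remark just before Proposition \ref{Tsep} that ``a dual result holds for a splitting tilting module'' --- and that dual result is precisely the induction for $G=-\otimes_B T$ on $\Y(T)$ that you have written out. One small slip to fix: the Hom-vanishing on the $B$-side is $\mathrm{Hom}_B(\X(T),\Y(T))=0$ (since $(\X(T),\Y(T))$ is a torsion pair with $\X(T)$ the torsion class), not $\mathrm{Hom}_B(\Y(T),\X(T))=0$; your actual deduction that $X\notin\X(T)$ uses the correct direction, via the nonzero map $X\to F(N)\in\Y(T)$.
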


Next, we show an example where the above results do not hold if we consider a morphism
$f:M\fle N,$ where the domain and codomain belong to different subcategories.

\begin{ej}
\emph{Let $A$  be the algebra given by the quiver}

\begin{displaymath}
\xymatrix  @R=0.5cm  @C=0.6cm {
1\ar[r]& 2\ar[r]& 3\ar[r]& 4& }
\end{displaymath}

\emph{The Auslander-Reiten quiver of  $\mbmA$ is the following:}

\[
\includegraphics[width=11cm]{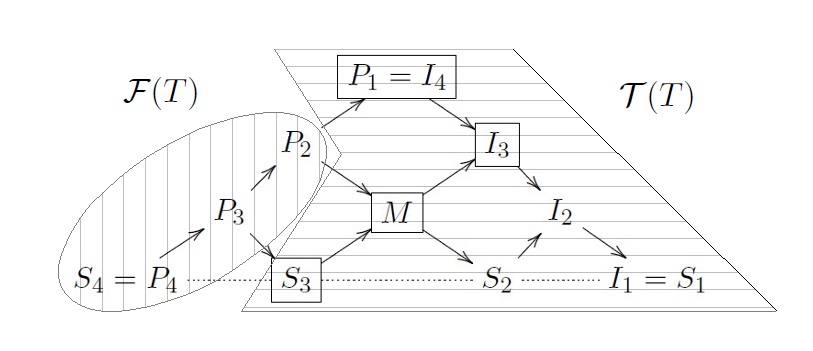}
\]

\noindent\emph{where $T=P_1\oplus I_3\oplus M\oplus S_3$ is a tilting $A$-module.}
\emph{The endomorphism algebra $B=\mbox{End}_AT$ is given by the quiver:}

\begin{displaymath}
\xymatrix  @R=0.6cm  @C=0.7cm {
1'& 2'\ar[r]\ar[l]& 3'\ar[r]& 4'& & }
\end{displaymath}

\noindent \emph{and the Auslander-Reiten quiver of $\mbmB$ is the following}

\[
\includegraphics[width=12cm]{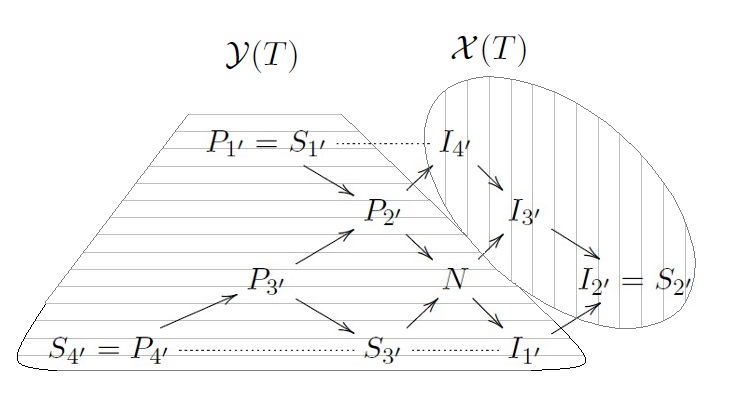}
\]

\emph{Now, if we consider an irreducible morphism $f:P_2\fle P_1$ in $\mbmA$, where $P_2\in \F(T)$ and $P_1\in \T(T)$,
we have that $dp(f)=1$.}

\emph{ On the other hand, $\mbox{Ext}_A^1(T,P_2)\simeq S_{2'}$ and $\mbox{Hom}_A(T,P_1)\simeq S_{1'}$
are non-isomorphic simple $B$-modules, and the unique morphism between them
is the zero morphism, which belongs to the infinite radical.}
\end{ej}

As an immediate consequence of Corollary \ref{BBcompos}, we get the following result.

\begin{coro}\label{BBirred}
Let $A$ be an algebra, $T$ be a separating and splitting tilting $A$-module and
$B=\emph{End}_{A}(T)$.
Then, $f:M\rightarrow N$ is an irreducible morphism between indecomposable $A$-modules
in ${\T}(T)$ (${\F}(T)$, respect.) if and only if $F(f):F(M)\rightarrow F(N)$
($F'(f):F'(M)\rightarrow F'(N)$, respect.)
is an irreducible morphism between indecomposable $B$-modules in
${\Y}(T)$ (${\X}(T)$, respect.), where $F=\emph{Hom}_A(T,-)$ ($F'=\emph{Ext}^1_A(T,-)$, respect.).
\end{coro}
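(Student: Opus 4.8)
The plan is to derive Corollary \ref{BBirred} directly from Corollary \ref{BBcompos}, using the characterization of irreducibility in terms of depth that was recalled in the preliminaries: for morphisms between indecomposable modules, being irreducible is equivalent to having depth exactly $1$. This characterization is available on both sides of the equivalence, so the whole argument reduces to translating ``depth $1$'' across the functor $F=\mathrm{Hom}_A(T,-)$ (and dually $F'=\mathrm{Ext}^1_A(T,-)$).

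First I would treat the $\T(T)$/$\Y(T)$ case; the $\F(T)$/$\X(T)$ case is dual and handled by the parenthetical statements. Let $M,N$ be indecomposable $A$-modules in $\T(T)$. Note that by Brenner and Butler's theorem $F$ restricts to an equivalence between $\T(T)$ and $\Y(T)$, so $F(M)$ and $F(N)$ are indecomposable $B$-modules in $\Y(T)$, and conversely every indecomposable $B$-module in $\Y(T)$ is of the form $F(M)$ for a unique indecomposable $M\in\T(T)$; this makes the statement symmetric and lets us freely pass back and forth. Now suppose $f:M\to N$ is irreducible. Then $f$ does not split, so $f\in\Re_A(M,N)$, i.e. $f\in\Re_A^1(M,N)$; and since $M,N$ are indecomposable, irreducibility gives $f\notin\Re_A^2(M,N)$, that is $\mathrm{dp}(f)=1$ by the result of \cite{B} recalled above. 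Applying Corollary \ref{BBcompos} with $n=1$ yields $F(f)\in\Re_B^1(F(M),F(N))$, and applying it with $n=2$ (in contrapositive form: $f\notin\Re_A^2$ forces $F(f)\notin\Re_B^2$) yields $F(f)\notin\Re_B^2(F(M),F(N))$. Hence $\mathrm{dp}(F(f))=1$, and since $F(M),F(N)$ are indecomposable, $F(f)$ is irreducible again by \cite{B}.

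For the converse, suppose $F(f):F(M)\to F(N)$ is irreducible between the indecomposable $B$-modules $F(M),F(N)$. Then $\mathrm{dp}(F(f))=1$, i.e. $F(f)\in\Re_B^1(F(M),F(N))$ but $F(f)\notin\Re_B^2(F(M),F(N))$. Corollary \ref{BBcompos} is an ``if and only if'' for each $n\geq1$, so $F(f)\in\Re_B^1$ gives $f\in\Re_A^1(M,N)$, and $F(f)\notin\Re_B^2$ gives $f\notin\Re_A^2(M,N)$; thus $\mathrm{dp}(f)=1$ and $f$ is irreducible. (One small point to mention: any morphism $f:M\to N$ lifts/descends along the equivalence, and every morphism between indecomposables in the relevant classes is obtained this way, so there is no loss of generality in writing an arbitrary morphism $F(M)\to F(N)$ as $F(f)$.) The dual statement for $F'$ on $\F(T)$ is identical, invoking the second half of Corollary \ref{BBcompos}.

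I do not expect a genuine obstacle here: the content has been front-loaded into Corollary \ref{BBcompos} (which already packaged Propositions \ref{Tsep} and \ref{Tsepdual} under the separating-and-splitting hypothesis), and the only ingredient added is the depth-$1$ characterization of irreducibility, which is standard and explicitly recalled. If anything needs care, it is making the ``only if''/``if'' bookkeeping clean — in particular being explicit that $\mathrm{dp}(g)=1$ for a morphism $g$ between indecomposables means precisely $g\in\Re^1\setminus\Re^2$, and that Corollary \ref{BBcompos} applied at the two values $n=1$ and $n=2$ transports exactly this condition in both directions.
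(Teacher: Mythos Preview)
Your argument is correct and matches the paper's approach: the paper states this corollary as an immediate consequence of Corollary~\ref{BBcompos} and gives no further proof, and your write-up simply spells out that deduction via the depth-$1$ characterization of irreducibility. There is nothing to add.
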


Our next aim is to compare the nilpotency indices
of the radical of the module category of an algebra and  the module category of the endomorphism
algebra of some particular tilting modules.

For any algebra $A$ it is well-known that if $T$ is a separating tilting $A$-module and  $B=\mbox{End}_A(T)$ is  representation-finite, then so is $A$.
Conversely, if $T$ is a splitting tilting $A$-module and $B=\mbox{End}_A(T)$ is representation-infinite,
then so is $A$.

\begin{nota}
\emph{Let $A\simeq kQ_A/I_A$ be a representation-finite algebra.
We denote by $(R_A)_0$ the subset of $(Q_A)_0$ of all the vertices that
determine the nilpotency index $r_A$ of $\Re(\mbox{mod}\;A)$, that is,
$(R_A)_0=\{ u\in (Q_A)_0 \mid r_u=r_A-1\}\subset (Q_A)_0$ where $r_u$ is the length of any path from $P_u$ to $I_u$ going through $S_u$.}
\end{nota}

We are in position to prove Theorem A.

\begin{teo}\label{Tsepar}
Let $A\simeq kQ_A/I_A$ be a representation-finite algebra, $T$ be a separating tilting $A$-module and
$B=\emeT$. If $B$ is representation-finite and for some $u\in ({R_A})_0$
we have that $P_u\in \emph{add}\,T$ then $r_A\leq r_B,$
where $r_A$ and $ r_B$ are the nilpotency indices of $\Re(\emmA)$ and $\Re(\emmB)$,
respectively.
\end{teo}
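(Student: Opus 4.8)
The plan is to exhibit, inside $\mbmB$, a path of irreducible morphisms between indecomposable $B$-modules of length at least $r_A-1$, and then invoke Theorem \ref{nilpo} (or rather the characterization that $r_B-1$ equals the maximal length of a path of irreducible morphisms realizing a depth, together with \cite[Remark 1]{C}) to conclude $r_B\geq r_A$. Concretely, fix $u\in (R_A)_0$ with $P_u\in\mathrm{add}\,T$. By \cite[Remark 1]{C}, $r_u=r_A-1$ is the length of any path of irreducible morphisms in $\mathrm{ind}\,A$ from $P_u$ to $I_u$ going through $S_u$; fix such a path and call it $\pi$. Equivalently, by Lemma \ref{clau2.5}(a), any nonzero $f:P_u\to I_u$ factoring through $S_u$ has $\mathrm{dp}(f)=r_A-1$.

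First I would check that the whole path $\pi$ lies in $\T(T)$. Since $T$ is separating, each indecomposable $A$-module lies in $\T(T)$ or in $\F(T)$, and $\mathrm{Hom}(\T(T),\F(T))=0$; since $P_u\in\mathrm{add}\,T\subseteq\T(T)$ and $\pi$ is a chain of nonzero morphisms starting at $P_u$, induction along $\pi$ (using that a nonzero morphism out of a module in $\T(T)$ cannot land in $\F(T)$) forces every module on $\pi$, in particular $I_u$ and $S_u$, to lie in $\T(T)$. Next, let $f:P_u\to I_u$ be the composite of the irreducible morphisms along $\pi$; this is a nonzero morphism factoring through $S_u$, so $\mathrm{dp}_A(f)=r_A-1$ by Lemma \ref{clau2.5}(a). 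Apply $F=\mathrm{Hom}_A(T,-)$. By Proposition \ref{Tsep}, $F(f)\in\Re_B^{r_A-1}(F(P_u),F(I_u))$; moreover $F(f)\neq 0$ because $F$ is an equivalence on $\T(T)$ by Brenner–Butler, so $F(f)$ has finite depth $\mathrm{dp}_B(F(f))=n\geq r_A-1$ (here we use that $B$ is representation-finite, so $\Re_B$ is nilpotent and no nonzero morphism has infinite depth). Applying the second part of Proposition \ref{Tsep} with this $n$, there is a nonzero path of irreducible morphisms between indecomposable $B$-modules (in $\Y(T)$) of length $n\geq r_A-1$.

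Finally, since $B$ is representation-finite, by Theorem \ref{nilpo}/\cite[Theorem A]{C} and \cite[Remark 1]{C} the number $r_B-1$ is an upper bound for the length of any path of irreducible morphisms between indecomposable $B$-modules that realizes a depth (indeed a composite of $\ell$ irreducible morphisms with nonzero composite of depth $\ell$ witnesses $\Re_B^\ell\neq 0$, so $\ell\leq r_B-1$). Hence $r_A-1\leq n\leq r_B-1$, i.e. $r_A\leq r_B$, as claimed.

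The main obstacle I anticipate is the passage from ``$F(f)\in\Re_B^{r_A-1}$'' to ``there is an actual path of irreducible morphisms of that length'': a priori $F(f)$ could a priori have depth strictly larger than $r_A-1$ (which is fine, it only helps) but one must ensure its depth is still finite and that the second statement of Proposition \ref{Tsep} applies verbatim — this is exactly where representation-finiteness of $B$ and the non-vanishing of $F(f)$ (via the Brenner–Butler equivalence on $\T(T)$) are essential. The rest is bookkeeping with the torsion pair and the already-established Proposition \ref{Tsep}.
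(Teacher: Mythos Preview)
Your core approach matches the paper's: take a nonzero $\varphi_u:P_u\to I_u$ factoring through $S_u$, observe $P_u,I_u\in\T(T)$ (the paper simply notes injectives always lie in $\T(T)$, while you route this through the path $\pi$), apply Proposition~\ref{Tsep} to get $0\neq F(\varphi_u)\in\Re_B^{r_A-1}$, and conclude. However, your final step takes an unnecessary and slightly problematic detour. You invoke the \emph{second} clause of Proposition~\ref{Tsep} with $n=\mathrm{dp}_B(F(f))$, but that clause is stated (and proved) under the assumption that $\mathrm{dp}(F(f))$ equals the \emph{same} $n$ for which $f\in\Re_A^n$; if $\mathrm{dp}_B(F(f))>r_A-1$ you no longer have $f\in\Re_A^n$ for that $n$, so the hypothesis does not literally apply. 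The paper sidesteps this entirely: from $0\neq F(\varphi_u)\in\Re_B^{r_A-1}$ one has $\Re_B^{r_A-1}(\mbmB)\neq 0$, and by the very definition of the nilpotency index $r_B$ this forces $r_A-1<r_B$, i.e.\ $r_A\leq r_B$. No path in $\mbmB$ is needed, nor any appeal to Theorem~\ref{nilpo}.

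A second minor point: you define $f$ as the composite along a chosen path $\pi$ of irreducible morphisms and assert it is nonzero, but this is not automatic for an arbitrary representation-finite $A$. The paper avoids this by simply taking \emph{any} nonzero morphism $P_u\to I_u$ factoring through $S_u$ (for instance the composite $P_u\twoheadrightarrow S_u\hookrightarrow I_u$), which always exists and to which Lemma~\ref{clau2.5}(a) applies directly.
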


\begin{proof}
Consider  $A\simeq kQ_A/I_A$ and $B=\mbeT$, with $T$ a separating tilting $A$-module
that satisfies that $P_u\in \mbox{add}\,T$,
for some $u\in ({R_A})_0$. Assume that $B$ is representation-finite then so is $A$. Let $r_A$ and $r_B$ be  the nilpotency index of
$\Re(\mbox{mod}\,A)$ and $\Re(\mbox{mod}\,B)$, respectively.

By hypothesis, the vertex  $u\in (Q_A)_0$ is such that $r_u =r_A-1$.
Consider a non-zero morphism $\phii_u:P_u\fle I_u$ that factors trough $S_u$,
then by Lemma \ref{clau2.5} we have that $dp(\phii_u)={r_A-1}$.
Since $T$ is a separating tilting $A$-module and $P_u, I_u\in\T(T)$,
by Proposition \ref{Tsep} we have that $0\neq F(\phii_u)\in \Re_B^{r_A-1}(F(P_u),F(I_u))$,
where $F=\mbox{Hom}_A(T,-)$. Since  $\Re_B^{r_A-1}(\mbmB)\neq 0$, by the maximality of
$r_B$ we deduce that $r_A\leq r_B$.
\end{proof}

In Theorem \ref{Tsepar} we compare the nilpotency indices of
$\Re(\mbox{mod}\,A)$ and $\Re(\mbox{mod}\,B)$.
Now, we are interested to study when such indices coincide.

Let $S$ be a simple $A$-module,
$P$ and $I$ be the projective cover and the injective envelope of $S$, respectively. Let $T$ be a tilting module
such that $P \in \mbox{add}\,T$ and $B=\mbeT$.
Recall that ${P'}=\mbox{Hom}_A(T,P)$ is an indecomposable
projective $B$-module and that ${I'}=\mbox{Hom}_A(T,I)$ is an indecomposable
injective  $B$-module. Moreover, ${P'}/\mbox{rad}{P'}\cong \mbox{soc}{I'}$.

\begin{prop}\label{propinc}
Let $A\simeq kQ_A/I_A$ be a representation-finite algebra, $T$ be a
separating tilting $A$-module and $B=\emeT$. Assume that
$B$ is representation-finite algebra and that there exists
$u\in ({R_A})_0$ such that $P_u\in \emph{add}\,T$,
where $P_u$ is the projective module corresponding to the vertex $u$.
If $r_A=r_B$, then there exists a non-zero path  of
irreducible morphisms between indecomposable modules from $F(P_u)$ to $F(I_u)$ of length $r_A-1$.
\end{prop}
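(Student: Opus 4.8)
The plan is to combine Lemma \ref{clau2.5} with the second part of Proposition \ref{Tsep}. By hypothesis $u \in (R_A)_0$, so $r_u = r_A - 1$; that is, any path of irreducible morphisms from $P_u$ to $I_u$ passing through $S_u$ has length $r_A-1$. Choose a non-zero morphism $\phii_u : P_u \to I_u$ that factors through the simple $S_u$. By Lemma \ref{clau2.5}(a) this forces $\mathrm{dp}(\phii_u) = r_u = r_A-1$. Since $T$ is separating and $P_u \in \mathrm{add}\,T$, both $P_u$ and $I_u = I(S_u)$ lie in $\T(T)$ (recall from the preamble that $F(P_u)$ is projective and $F(I_u)$ is injective in $\mbmB$), so Proposition \ref{Tsep} applies: because $\phii_u \in \Re_A^{r_A-1}(P_u,I_u)$ we get $F(\phii_u) \in \Re_B^{r_A-1}(F(P_u),F(I_u))$.

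Next I would upgrade this to an equality of depth. Suppose for contradiction that $F(\phii_u) \in \Re_B^{r_A}(F(P_u),F(I_u))$. Since $B$ is representation-finite with nilpotency index $r_B$ and we are assuming $r_A = r_B$, we have $\Re_B^{r_A}(\mbmB) = \Re_B^{r_B}(\mbmB) = 0$, hence $F(\phii_u) = 0$. But $F = \mathrm{Hom}_A(T,-)$ is an equivalence on $\T(T)$ by Brenner--Butler, and $\phii_u \neq 0$, a contradiction. Therefore $\mathrm{dp}(F(\phii_u)) = r_A - 1$ exactly. Now the "in addition" clause of Proposition \ref{Tsep} gives precisely what we want: there is a non-zero path of irreducible morphisms between indecomposable $B$-modules (in $\Y(T)$) from $F(P_u)$ to $F(I_u)$ of length $r_A - 1$.

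The only subtlety — and the step I would be most careful about — is checking that the depth of $F(\phii_u)$ is exactly $r_A-1$ and not larger; this is where the hypothesis $r_A = r_B$ is used in an essential way, via the vanishing $\Re_B^{r_B}(\mbmB) = 0$ together with faithfulness of $F$ on $\T(T)$. Once that is in place, everything else is a direct citation of Lemma \ref{clau2.5}(a), of the torsion-class membership of $P_u$ and $I_u$, and of Proposition \ref{Tsep}. I do not expect any genuine obstacle; the argument is essentially a specialization of Theorem \ref{Tsepar}'s proof tracking the depth rather than just the radical layer.
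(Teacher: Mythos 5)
Your proof is correct and takes essentially the same route as the paper: both hinge on Lemma \ref{clau2.5}(a), on $P_u, I_u \in \T(T)$, and on using $\Re_B^{r_B}(\mbmB)=0$ together with $r_A=r_B$ to force the depth not to drop. The only difference is presentational — you pin down $dp(F(\phii_u)) = r_A-1$ via faithfulness of $F$ and then invoke the ``in addition'' clause of Proposition \ref{Tsep}, whereas the paper re-runs the path decomposition of $\phii_u$ and argues termwise that each $F(f_i)$ is irreducible, which is the same depth computation factored differently.
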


\begin{proof}
Let $A$ be a representation-finite algebra and $T$ be a separating tilting module, with $P_u\in \mbox{add}\,T$
for some $u\in (R_A)_0$.
Assume that $B=\mbox{End}_A(T)$ is representation-finite and
$r_A=r_B$.
Consider $\phii_u:P_u\fle I_u$ a non-zero morphism
that factors through $S_u$. Thus, $dp(\phii_u)={r_A-1}$
and there is a non-zero path of irreducible morphisms between indecomposable modules,
$P_u\stackrel{f_1} \longrightarrow X_1\stackrel{f_{2}} \longrightarrow \dots \stackrel{f_{r_A-2}} \longrightarrow X_{r_A-2}\stackrel{f_{r_A-1}} \longrightarrow I_u$.
\noindent Since $T$ is a separating tilting module, we claim that for each $i$, the modules $X_i$
belong to the subcategory $\T(T)$, because $P_u\in \T(T)$ and $\mbox{Hom}(\T(T), \F(T))=0$. Following Brenner and Butler's theorem, there exists a non-zero path
of non-isomorphisms as follows:

$$\til{\psi}=F(P_u)\stackrel{F(f_1)} \longrightarrow F(X_1)\longrightarrow  \dots \longrightarrow  F(X_{r_A-2})\stackrel{F(f_{r_A-1})} \longrightarrow F(I_u).$$

\noindent It is clear that every $F(X_i)$ is a $B$-module in $\Y(T)$, for $1\leq i\leq r_A-2$.
Moreover, each $F(f_i)$ is an irreducible morphism. Indeed,
suppose that there exists an integer $j$ with  $1\leq j\leq r_A-1$, such that $F(f_j)$ is not
irreducible. Then, $F(f_j)\in \Re_B^2(F(X_{j-1}),F(X_j))$ and hence
$\til{\psi}\in \Re^{r_A}(\mbmB)=\Re^{r_B}(\mbmB)$, a contradiction to the fact that $\Re^{r_B}(\mbmB)=0$.
Therefore, each morphism $F(f_i)$ is irreducible, proving our result.
\end{proof}

The next examples show that the converse of Proposition \ref{propinc} is not necessarily true.

\begin{ej}
\emph{$(a)$ Consider the algebras $A$ and  $B$ given in Example \ref{eje1} $(a)$. Let $T$ be  the tilting $A$-module, also given in the same mentioned example.
It is not hard to check that the nilpotency index $r_A$ of $\Re(\mbmA)$ is four. Moreover,
the vertex $3$ in $Q_A$  belongs to $(R_A)_0$ and there exists a non-zero path  of
irreducible morphisms between indecomposable modules from $P_{3'}=\mbox{Hom}_A(T,P_3)$ to $I_{3'}=\mbox{Hom}_A(T,I_3)$ of length three.
However, the nilpotency index $r_B$ of $\Re(\mbmB)$ is five. Therefore, $r_A \neq r_B$.}

\emph{ $(b)$ If instead of the existence of a vertex $u \in (R_A)_0$ such that $P_u \in \mbox{add}\,T$ where $T$ a separating tilting $A$-module we put the condition that for all $u \in (R_A)_0$ we have that  $P_u \in \mbox{add}\,T$  then still the converse of  Proposition \ref{propinc} does not hold as we show in our next example.}

\emph{Let $A$ be the algebra given by the quiver}
\begin{displaymath}
	\xymatrix  @R=0.4cm  @C=0.7cm {
&4\ar[dl]_{\alpha}\ar[dr]^{\beta}&&&\\
2\ar[dr]_{\gamma}&&3\ar[dl]_{\delta}&&\\
&1&5\ar[l]^{\mu}&6\ar[l]^{\lambda}}
	\end{displaymath}

\noindent \emph{with $I_A=<\gamma\alpha-\delta\beta, \mu\lambda>$.
The Auslander-Reiten quiver of  $\mbmA$ is the following:}

\[
\includegraphics[width=15cm]{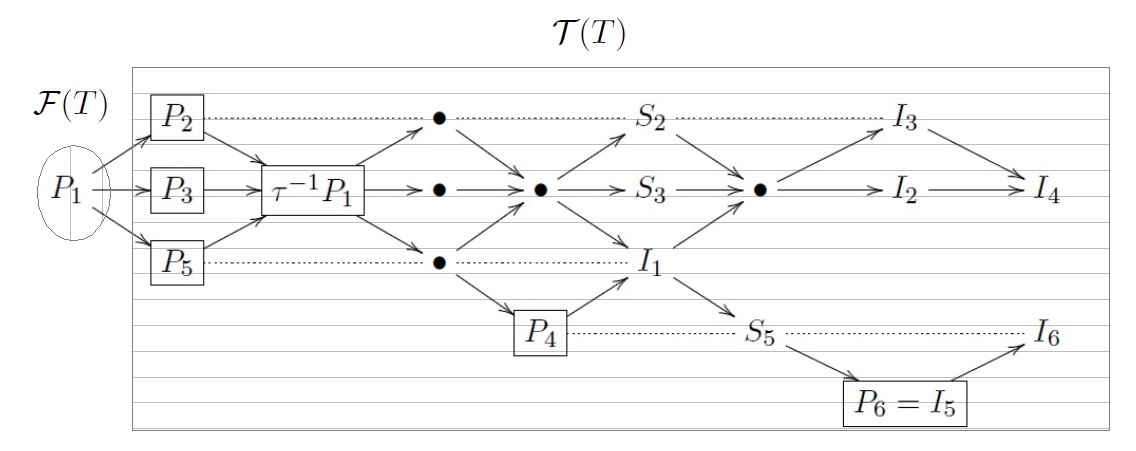}
\]

\noindent\emph{where
$T=\tau P_1\oplus P_2\oplus P_3\oplus P_4\oplus P_5\oplus P_6$
is the APR-tilting $A$-module corresponding to the vertex one.
The  endomorphism algebra  $B=\mbox{End}_AT$ is given by the bound quiver:}
\begin{displaymath}
	\xymatrix  @R=0.5cm  @C=0.4cm {
4'\ar[dr]_{\alpha'}&&6'\ar[dl]^{\beta'}\\
&1'\ar[dl]_{\gamma'}\ar[dr]^{\lambda'}\ar[d]_{\delta'}\\
2'&3'&5'	                 }
	\end{displaymath}
\noindent \emph{with $I_B=<\lambda'\alpha',\gamma'\beta',\delta'\beta'>$,
and the Auslander-Reiten quiver of $\mbmB$ is the following}

\[
\includegraphics[width=15cm]{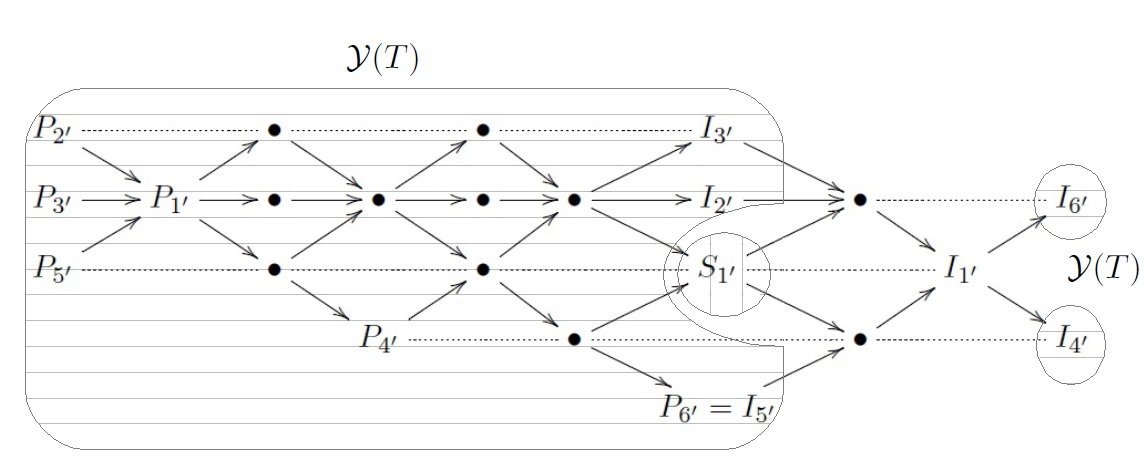}
\]

\emph{Note that the nilpotency index of $\Re(\mbmA)$ is seven. The set $(R_A)_0=\{2,3,5\}$ and
the modules $P_2,P_3$ and $P_5$ belong to the subcategory $\mbox{add}\,T$.}

\emph{On the other hand, the paths in $\mbox{ind}\,B$ from $F(P_i)=P_{i'}$ to $F(I_i)=I_{i'}$ going through the  simple $S_{i'},$ for $i=2,3,5$,
have length six, where $F=\mbox{Hom}_A(T,-)$.  However, the nilpotency index of $\Re(\mbmB)$ is eight, and therefore $r_A\neq r_B$.}
\end{ej}

In the next section we shall prove that if $B$ is an hereditary algebra, hence $A$ is a tilted algebra, then the converse of the implication in the statement of Proposition \ref{propinc} is still true.

Now, we prove a result useful for further considerations.

\begin{prop}\label{nopozo}
Let $A\simeq kQ_A/I_A$ be a representation-finite algebra and $P_a$ be a simple projective $A$-module.  Then, there
is a vertex $b \neq a$ such that   $r_a\leq r_b.$
\end{prop}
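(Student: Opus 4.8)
The plan is to extract, from the path realizing $r_a$ (the length of any path of irreducible morphisms $P_a \rightsquigarrow I_a$ through $S_a$), a path through a neighbouring vertex $b$ of at least the same length. Since $P_a$ is simple projective, $P_a = S_a$, so $n_a = 0$ and $r_a = m_a = d_l(g_a)$, where $g_a : I_a \to I_a/\operatorname{soc} I_a$. By \cite[Remark 1]{C} (recalled before Lemma \ref{clau2.5}), $r_a$ equals the length of a path of irreducible morphisms from $P_a = S_a$ to $I_a$; the first morphism of such a path is an irreducible morphism $S_a \to X_1$ out of the simple projective, hence the start of the almost split sequence $0 \to S_a \to E \to \tau^{-1}S_a \to 0$, so $X_1$ is a summand of $E$. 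First I would fix such a path $\psi \colon S_a = M_0 \stackrel{f_1}{\to} M_1 \stackrel{f_2}{\to} \cdots \stackrel{f_{r_a}}{\to} M_{r_a} = I_a$ of irreducible morphisms with $\mathrm{dp}(\psi) = r_a$.

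Next I would locate a vertex $b$. Since $P_a = S_a$ is not injective (otherwise $A \cong k$ and $r_a = 0$, for which the statement is vacuous or trivial after adjusting), the arrow(s) out of $a$ in $Q_A$ give a neighbour; let $\alpha : a \to b$ be an arrow in $Q_A$, so there is an irreducible monomorphism $S_a = P_a \hookrightarrow P_b$ is not quite right—rather the relevant fact is that $\operatorname{top}(\operatorname{rad} P_b)$ involves $S_a$, equivalently $P_a$ embeds in $\operatorname{rad} P_b$, giving a path $P_a \rightsquigarrow P_b$. Then I would build a path $P_b \rightsquigarrow I_b$ through $S_b$ of length $r_b$ and compare. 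The cleanest route is: the almost split sequence starting at $S_a$ has middle term containing the $P_b$'s for all arrows $a \to b$ (this is standard since $S_a$ projective); thus $f_1 : S_a \to M_1$ with $M_1 = P_b$ for some arrow $a\to b$, and the path $\psi$ restricted from $M_1 = P_b$ onward has length $r_a - 1$ and lands in $I_a$. The task is then to compare this with the length $r_b$ of the canonical path $P_b \rightsquigarrow S_b \rightsquigarrow I_b$.

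The main obstacle is bounding $r_b$ from below by $r_a$: having a path of length $r_a - 1$ from $P_b$ to $I_a$ is not immediately a path through $S_b$ to $I_b$. I would handle this using Lemma \ref{clau2.5} and the interpretation of $r_b$ as a depth: since $P_a = S_a$, any nonzero morphism $P_b \to I_a$ obtained by composing $f_2 \cdots f_{r_a}$ has depth $r_a - 1$, and I must show this forces $r_b \geq r_a$. For this I would argue that the irreducible morphism $\iota_b : \operatorname{rad} P_b \hookrightarrow P_b$ (or its companion at the injective end) has right degree at least $r_a - 1$, because precomposing the depth-$(r_a-1)$ map $P_b \to I_a$ shows a long composition survives; combined with the dual analysis at $I_b$ and the identity $r_b = n_b + m_b$, one gets $r_b \geq r_a$. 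Concretely I expect the inequality to come out of: the path $P_a \to P_b \to \cdots \to I_a$ of length $r_a$ factors (by \cite[Remark 1]{C} applied at $b$, comparing paths $P_b \rightsquigarrow I_b$) through a path of length at least $r_a - 1$ from $P_b$, and adding the initial irreducible morphism $S_a \hookrightarrow P_b$ reconstitutes length $r_a$ within a path $P_b \rightsquigarrow I_b$ once one checks the target can be routed through $I_b$; if not directly $I_b$, the same degree count via $m_b = d_l(g_b)$ applied to the injective envelope of $S_b$ still yields $r_b \geq r_a$. I would present the argument in terms of degrees $n_b, m_b$ to avoid delicate path-surgery, invoking Lemma \ref{clau2.5}(a) for the depth computation and the maximality defining $r_A$ only peripherally.
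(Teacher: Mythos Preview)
Your setup matches the paper's: take a nonzero path of irreducible morphisms $P_a=S_a \xrightarrow{h_1} X_1 \to \cdots \xrightarrow{h_{r_a}} I_a$ of length $r_a$, note that $X_1$ must be projective (the paper argues this directly: otherwise there is an irreducible morphism $\tau X_1 \to P_a$, forcing $\tau X_1$ to be a summand of $\operatorname{rad}P_a=0$), write $X_1=P_b$ with $b\neq a$, and set $\psi=h_{r_a}\cdots h_2:P_b\to I_a$, a nonzero morphism in $\Re_A^{\,r_a-1}$. You also correctly isolate the obstacle: $\psi$ lands in $I_a$, not $I_b$, and does not factor through $S_b$.

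The gap is precisely at this point. The paper closes it with a tool you do not invoke: \cite[Lemma~2.2]{C}. That lemma supplies a morphism $\xi\in\Re_A(I_a,I_b)$ such that the composite $\xi\psi:P_b\to I_b$ is \emph{nonzero and factors through}~$S_b$. Then $\xi\psi\in\Re_A^{\,r_a}(P_b,I_b)\setminus\{0\}$, and Lemma~\ref{clau2.5}(a) gives $\operatorname{dp}(\xi\psi)=r_b$, whence $r_a\le r_b$.

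Your proposed workaround via degrees does not substitute for this step. The existence of a nonzero morphism of depth $r_a-1$ from $P_b$ to \emph{some} module (here $I_a$) yields no lower bound on $d_r(\iota_b)$ or $d_l(\theta_b)$: those degrees measure when composition with $\iota_b$ or $\theta_b$ causes a jump in depth, not merely whether deep morphisms out of $P_b$ exist. Likewise, ``routing the target through $I_b$'' is exactly what needs proving, and your sketch offers no mechanism for it. Without producing a nonzero morphism $P_b\to I_b$ that factors through $S_b$, Lemma~\ref{clau2.5} cannot be applied and the inequality remains open. (A minor side remark: since $P_a$ is simple projective, $a$ is a sink, so the arrows in $Q_A$ run $b\to a$, not $a\to b$ as you wrote; your claim that the middle term of the almost split sequence starting at $S_a$ consists of the relevant $P_b$'s is also not the cleanest justification---the paper's $\tau$-argument above is both shorter and self-contained.)
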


\begin{proof}
Since $P_a=S_a$, then any non-zero morphism $\phii_a:P_a\fle I_a$
satisfies that $dp(\varphi_a)={r_a}$.
Hence, by \cite[Proposition 7.4]{ARS}, there is a non-zero path of
irreducible morphisms between indecomposable modules of length $r_a$ as follows:

$$P_a\stackrel{h_1} \longrightarrow X_1\stackrel{h_2}\longrightarrow \dots \stackrel{h_{r_a-1}}\longrightarrow X_{r_a-1}\stackrel{h_{r_a}}\longrightarrow I_a.$$

We claim that $X_1$ is projective. In fact, otherwise,  there is an irreducible morphism $\tau X_1\fle P_a$.
Since $P_a$ is projective, then  $\tau X_1$ is a direct summand of $\mbox{rad} P_a$,
but since $P_a$ is simple we get to a contradiction.
Hence we write $X_1=P_b$, for some vertex $b\neq a$.

Consider $\psi= h_{r_a} \dots h_2: P_b \fle I_a$. Then, $dp(\psi)={r_a-1}.$
Since $b\neq a$, we have that $\psi$ does not factor through $S_b$.
Following \cite[Lemma 2.2]{C}, we know that there is a non-zero morphism $\xi\in \Re_A(I_a,I_b)$
such that the composition $\xi \psi$ does not vanish and moreover,  it factors through $S_b$.
Therefore,  $\xi \psi\in \Re_A^{r_a}(P_{b},I_{b})$. Finally, by Lemma \ref{clau2.5}
we get that $r_a\leq r_b$.
\end{proof}

A similar result holds if $I_a$ is a simple injective $A$-module.
Below, we state the result.

\begin{prop}\label{nofuente}
Let $A\simeq kQ_A/I_A$ be a representation-finite algebra.
Assume that $Q_A$ has at leat one source  and that
$I_a$ is a simple injective $A$-module.  Then, there
is a vertex $b\neq a$ such that $r_a\leq r_b.$
\end{prop}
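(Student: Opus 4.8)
The plan is to dualize the argument used in the proof of Proposition \ref{nopozo}. Recall that in \ref{nopozo} we started with a simple projective $P_a$, wrote down a path of length $r_a$ from $P_a$ to $I_a$ realizing a morphism of depth $r_a$, showed the first module in that path must be a projective $P_b$ with $b\neq a$ (because $P_a$ simple forces $\tau X_1$ to be a direct summand of $\operatorname{rad}P_a=0$), truncated the path to get $\psi:P_b\to I_a$ of depth $r_a-1$, and then used \cite[Lemma 2.2]{C} to precompose (there, postcompose) with a radical morphism $\xi\in\Re_A(I_a,I_b)$ producing a nonzero composite factoring through $S_b$, whence $\xi\psi\in\Re_A^{r_a}(P_b,I_b)$ and $r_a\le r_b$ by Lemma \ref{clau2.5}. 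For \ref{nofuente} I would run the same scheme with the roles of projectives/injectives, socles/radicals, and $\tau$/$\tau^{-1}$ interchanged.

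Concretely: since $I_a=S_a$ is simple, any nonzero morphism $\phii_a:P_a\to I_a$ has $dp(\phii_a)=r_a$, so by \cite[Proposition 7.4]{ARS} there is a nonzero path of irreducible morphisms between indecomposables
$$P_a\stackrel{h_1}{\longrightarrow} X_1\longrightarrow\cdots\longrightarrow X_{r_a-1}\stackrel{h_{r_a}}{\longrightarrow} I_a$$
of length $r_a$. The dual of the projectivity claim says $X_{r_a-1}$ is injective: otherwise there would be an irreducible morphism $I_a\to\tau^{-1}X_{r_a-1}$, forcing $\tau^{-1}X_{r_a-1}$ to be a direct summand of $I_a/\operatorname{soc}I_a=0$ since $I_a$ is simple, a contradiction. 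So write $X_{r_a-1}=I_b$ for some vertex $b$; here is where the hypothesis that $Q_A$ has a source is needed, to guarantee $b\neq a$ (a source vertex gives a simple projective, but more to the point it prevents the degenerate situation in which the only available injective is $I_a$ itself, i.e.\ it ensures the truncated path is genuinely nontrivial and lands at a different injective — I would check, as in \ref{nopozo}, that $a$ being a source excludes $b=a$). Then $\psi=h_{r_a-1}\cdots h_1:P_a\to I_b$ has $dp(\psi)=r_a-1$ and does not factor through $S_b$ since $b\neq a$ and the source $P_a$ is not $S_b$.

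Now apply the dual of \cite[Lemma 2.2]{C}: there is a nonzero morphism $\xi\in\Re_A(P_b,P_a)$ such that $\psi\xi$ is nonzero and factors through $S_b$. Hence $\psi\xi\in\Re_A^{r_a}(P_b,I_b)$, and by Lemma \ref{clau2.5}(a) this forces $r_a\le r_b$. The main obstacle I anticipate is the bookkeeping around the hypothesis "$Q_A$ has at least one source'': I need it precisely to rule out $b=a$ in the step identifying $X_{r_a-1}=I_b$, and to be sure the cited Lemma of \cite{C} applies in the direction needed (producing a radical morphism into $P_a$ rather than out of $I_a$). Everything else is a mechanical transcription of the proof of Proposition \ref{nopozo} under the standard duality $D:\operatorname{mod}A\to\operatorname{mod}A^{\mathrm{op}}$, which swaps projectives with injectives, $\operatorname{rad}$ with $\operatorname{soc}$, and $\tau$ with $\tau^{-1}$, and preserves radical layers and depths.
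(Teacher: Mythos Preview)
Your approach is correct and matches the paper's: the paper does not write out a proof of Proposition \ref{nofuente} but simply states it as the dual of Proposition \ref{nopozo}, and your dualization is precisely that.

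One clarification, since you flag it as your ``main obstacle'': the hypothesis that $Q_A$ has at least one source is \emph{not} what guarantees $b\neq a$, and in fact that hypothesis is redundant --- the assumption that $I_a$ is simple injective already makes $a$ a source (under the paper's conventions a source corresponds to a simple injective, not a simple projective as you wrote). The reason $b\neq a$ is immediate and parallel to the original: you have an irreducible morphism $I_b\to I_a$ with $I_a=S_a$ simple, and since $\mathrm{End}_A(S_a)\cong k$ has zero radical there is no irreducible endomorphism of $I_a$; equivalently, the Auslander--Reiten quiver of a representation-finite algebra over an algebraically closed field has no loops. The rest of your dualization --- showing $X_{r_a-1}$ is injective via $I_a/\mathrm{soc}\,I_a=0$, truncating to $\psi:P_a\to I_b$, noting $\psi$ cannot factor through $S_b$ because $\mathrm{Hom}_A(P_a,S_b)=0$ for $b\neq a$, applying the dual of \cite[Lemma 2.2]{C} to obtain $\xi\in\Re_A(P_b,P_a)$ with $\psi\xi$ nonzero and factoring through $S_b$, and concluding $r_a\leq r_b$ via Lemma \ref{clau2.5} --- is correct.
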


\begin{obs}
\emph{Let $A\simeq kQ_A/I$ be an algebra and $T[a]$ be the APR-tilting module
corresponding to a  sink $a$ of $Q_A$. By the construction of the module $T[a]$
and Proposition \ref{nopozo}, it follows that there is a vertex $u\in(R_A)_0$
such that $P_u\in \mbox{add}\,T[a]$.}
\end{obs}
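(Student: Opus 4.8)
The plan is to unwind the definition of the APR-tilting module $T[a]$ and combine it with Proposition \ref{nopozo}. Recall that $a$ is a sink of $Q_A$, so $P_a = S_a$ is a simple projective module, and by definition $T[a] = \tau^{-1}(S_a) \oplus (\oplus_{b \neq a} P_b)$. Thus $\mbox{add}\,T[a]$ contains the projective module $P_b$ for every vertex $b \neq a$; the only projective that need not lie in $\mbox{add}\,T[a]$ is $P_a$ itself. So the content of the Remark reduces to showing that $a \notin (R_A)_0$, equivalently that there is some vertex $u \neq a$ with $r_u = r_A - 1$, i.e. with $r_u$ maximal among all $\{r_b\}_{b \in (Q_A)_0}$.

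First I would invoke Proposition \ref{nopozo}: since $P_a = S_a$ is a simple projective $A$-module, there exists a vertex $b \neq a$ with $r_a \leq r_b$. Now take any vertex $u$ realizing the maximum, $r_u = \max\{r_c\}_{c \in (Q_A)_0} = r_A - 1$ (such a $u$ exists by the theorem of \cite{C} quoted in the preliminaries, which identifies $r_A$ with $\max\{r_c\} + 1$). If $u \neq a$ we are done immediately, taking that $u$. If instead the maximum were attained only at $u = a$, then Proposition \ref{nopozo} would give a vertex $b \neq a$ with $r_A - 1 = r_a \leq r_b$, and by maximality $r_b = r_A - 1$; so $b \in (R_A)_0$ with $b \neq a$, and we use this $b$ in place of $u$. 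Either way we obtain a vertex $u \in (R_A)_0$ with $u \neq a$, hence with $P_u \in \mbox{add}\,T[a]$, which is exactly the assertion.

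This argument is essentially immediate once Proposition \ref{nopozo} is available, so there is no real obstacle here; the only point requiring a moment of care is the bookkeeping that distinguishes the case where the maximum is attained at a vertex other than $a$ from the case where it is attained (possibly also) at $a$ — in the latter case Proposition \ref{nopozo} is precisely what produces an alternative maximizing vertex $b \neq a$. I would also remark that the hypothesis of Proposition \ref{nopozo} is satisfied without extra assumptions, since $P_a = S_a$ is literally a simple projective module whenever $a$ is a sink of $Q_A$.
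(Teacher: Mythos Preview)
Your argument is correct and follows exactly the route the paper indicates: unpack the definition of $T[a]$ so that $P_b \in \mbox{add}\,T[a]$ for every $b \neq a$, and then invoke Proposition \ref{nopozo} to guarantee that $(R_A)_0$ contains some vertex $u \neq a$. One small wording slip: the reduction is not to showing $a \notin (R_A)_0$, but only to finding some $u \in (R_A)_0$ with $u \neq a$ --- and this weaker statement is precisely what your case analysis establishes.
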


The next lemma is a particular case of Theorem \ref{Tsepar}.

\begin{lema}\label{APRgral}
Let $A\simeq kQ_A/I_A$ be an algebra and $a\in(Q_A)_0$ be a sink. Let $T[a]$ be the APR-tilting $A$-module corresponding to the
vertex $a$ and $B=\emph{End}_A(T[a])$. If $B$ is representation-finite
then $r_A\leq r_B$.
\end{lema}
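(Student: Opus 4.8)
The statement to prove is Lemma~\ref{APRgral}: if $a\in(Q_A)_0$ is a sink, $T[a]$ is the associated APR-tilting $A$-module, and $B=\mathrm{End}_A(T[a])$ is representation-finite, then $r_A\leq r_B$. The plan is to reduce this to Theorem~\ref{Tsepar}. For that theorem to apply we need three things: that $A$ is representation-finite, that $T[a]$ is a separating tilting module, and that there exists a vertex $u\in(R_A)_0$ with $P_u\in\mathrm{add}\,T[a]$.

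First I would record that $A$ is representation-finite. Since $T[a]$ is a separating tilting $A$-module (APR-tilting modules are always separating, as recalled in Section~1) and $B=\mathrm{End}_A(T[a])$ is representation-finite by hypothesis, the general fact stated just before the Notation box --- namely, that if $T$ is a separating tilting module and $\mathrm{End}_A(T)$ is representation-finite then so is $A$ --- gives that $A$ is representation-finite. Hence $r_A$ is well defined.

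Next I would produce the required vertex $u\in(R_A)_0$ with $P_u\in\mathrm{add}\,T[a]$. This is exactly the content of the Remark immediately preceding the lemma: by the construction $T[a]=\tau^{-1}(S_a)\oplus\bigl(\bigoplus_{b\neq a}P_b\bigr)$, together with Proposition~\ref{nopozo} applied to the simple projective module $P_a=S_a$ (which yields a vertex $b\neq a$ with $r_a\leq r_b$, and iterating, a vertex $u\neq a$ realizing the maximum $r_u=r_A-1$, i.e. $u\in(R_A)_0$), one gets such a $u$ with $u\neq a$, so that $P_u$ is one of the direct summands $\bigoplus_{b\neq a}P_b$ of $T[a]$, i.e. $P_u\in\mathrm{add}\,T[a]$. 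It is worth spelling out the small iteration: Proposition~\ref{nopozo} only guarantees some $b$ with $r_a\le r_b$; one should observe that the vertex realizing $\max_v r_v$ can be taken different from $a$ precisely because $r_a\le r_b$ for some $b\neq a$, so $\max_v r_v=\max_{v\neq a}r_v$ and any maximizing $v$ lies in $(R_A)_0$ and is $\neq a$.

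Finally, with all hypotheses of Theorem~\ref{Tsepar} verified --- $A$ representation-finite, $T[a]$ a separating tilting $A$-module, $B$ representation-finite, and $u\in(R_A)_0$ with $P_u\in\mathrm{add}\,T[a]$ --- that theorem gives $r_A\leq r_B$, which is the assertion. The only place requiring genuine care is the second step, extracting $u\in(R_A)_0$ distinct from $a$; everything else is a direct citation. Since both the Remark and Proposition~\ref{nopozo} are already established, there is no real obstacle, and the proof is essentially two lines once the reduction is set up; I would simply write: ``Since $T[a]$ is a separating tilting module and $B$ is representation-finite, $A$ is representation-finite. By the Remark above, there is $u\in(R_A)_0$ with $P_u\in\mathrm{add}\,T[a]$. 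Theorem~\ref{Tsepar} then yields $r_A\leq r_B$.''
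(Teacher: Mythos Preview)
Your proposal is correct and follows exactly the approach of the paper, which simply declares the lemma a particular case of Theorem~\ref{Tsepar} after noting (in the Remark immediately preceding it) that Proposition~\ref{nopozo} furnishes a vertex $u\in(R_A)_0$ with $u\neq a$, hence $P_u\in\mathrm{add}\,T[a]$. Your explicit verification that $A$ is representation-finite and your spelling out of why the maximizer can be taken $\neq a$ are appropriate elaborations of what the paper leaves implicit.
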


In case that $T[a]$ is an APR-tilting $A$-module, with $a$ a free sink of $Q_A$ we prove that the nilpotency indices
of $\Re(\mbox{mod}\,A)$ and $\Re(\mbox{mod}\,B)$ coincide. More precisely, Lemma \ref{APRgral} and Lemma \ref{APRlibre} give us Theorem B.

\begin{lema}\label{APRlibre}
Let $A\simeq kQ_A/I_A$ be a representation-finite algebra and $a\in(Q_A)_0$
a free sink.
Let $T[a]$ be the APR-tilting $A$-module corresponding to the vertex $a$ and $B=\emph{End}_A(T[a])$.
Then $r_A=r_B$.
\end{lema}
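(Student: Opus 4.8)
The plan is to combine Lemma \ref{APRgral} with the dual (splitting) side of the story. By Lemma \ref{APRgral} we already have one inequality: since $B = \mathrm{End}_A(T[a])$ is representation-finite (as $\overline{\Delta}$-tilting preserves finiteness here, $A$ being representation-finite), we get $r_A \le r_B$. So the whole content is the reverse inequality $r_B \le r_A$, and for this I would invoke that $a$ is a \emph{free sink}: by the result of \cite{As} recalled in the preliminaries, an APR-tilting module $T[a]$ is splitting precisely when $a$ is a free sink, and moreover the vertex of $Q_B$ corresponding to $a$ is then a \emph{source}. So $T[a]$ is a separating \emph{and} splitting tilting $A$-module, and by Brenner--Butler $A = \mathrm{End}_B(T[a])$ with $T[a]$ a tilting $B$-module. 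The idea is to run the argument of Theorem \ref{Tsepar} in the reverse direction, using the splitting hypothesis to play the role that the separating hypothesis played before.

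More precisely, first I would show that there is a vertex $v \in (R_B)_0$ with $P_v \in \mathrm{add}\,T[a]$, viewed now as a $B$-module. Pick $v \in (R_B)_0$, so $r_v = r_B - 1$ and there is a non-zero morphism $\phi_v : P_v \to I_v$ factoring through $S_v$ with $\mathrm{dp}(\phi_v) = r_B - 1$, realized by a path of irreducible morphisms $P_v \rightsquigarrow I_v$ through $S_v$ in $\mathrm{ind}\,B$. If $v$ happens to be the source of $Q_B$ corresponding to $a$, then $P_v = S_v$ is simple projective, and by Proposition \ref{nopozo} applied to $B$ there is a vertex $v' \neq v$ with $r_v \le r_{v'}$, so we may replace $v$ by $v'$ and repeat; since $Q_B$ is finite and the source is the only vertex causing trouble, after finitely many steps we obtain a $v \in (R_B)_0$ that is not the distinguished source, hence with $P_v$ not equal to the "new" simple at that source. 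The point of avoiding that source is that $\mathrm{add}\,T[a]$, as a tilting $B$-module, contains all the indecomposable projective $B$-modules except the one at the source corresponding to $a$ — this follows from the explicit description $T[a] = \tau^{-1}S_a \oplus (\bigoplus_{b \neq a} P_b)$ on the $A$-side together with Brenner--Butler duality, analogously to the Remark preceding Lemma \ref{APRgral} but read from the $B$-side. Thus $P_v \in \mathrm{add}\,T[a]$ as a $B$-module.

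Now I would apply Corollary \ref{BBcompos} / Corollary \ref{BBirred}: because $T[a]$ is separating and splitting, a morphism between indecomposables in $\mathcal{T}(T[a])$ (resp.\ $\mathcal{F}(T[a])$) lies in $\Re_A^n$ iff its image under $F$ (resp.\ $F'$) lies in $\Re_B^n$, and the same with the roles of $A$ and $B$ exchanged via the inverse equivalences $-\otimes_B T[a]$ and $\mathrm{Tor}_1^B(-,T[a])$. Since $P_v, I_v, S_v$ and the whole path $P_v \rightsquigarrow I_v$ lie in a single one of the two subcategories $\mathcal{Y}(T[a]), \mathcal{X}(T[a])$ of $\mathrm{mod}\,B$ (because $S_v$ is not the simple at the source, and the projective cover / injective envelope / intermediate terms of a non-simple-at-the-source module stay on the same side — here the splitting of the $B$-tilting module is what guarantees every indecomposable is on one side), the image of $\phi_v$ under the corresponding functor is a non-zero morphism $P_v^A \to I_v^A$ between the corresponding $A$-modules with depth $r_B - 1$, and it still factors through the image of $S_v$. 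Hence by Lemma \ref{clau2.5}(a) applied to $A$, the vertex $w$ of $Q_A$ with that projective has $r_w = r_B - 1$, so $r_A = \max_a r_a + 1 \ge r_w + 1 = r_B$. Together with Lemma \ref{APRgral} this gives $r_A = r_B$.

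The main obstacle, and the step I would spend the most care on, is the bookkeeping in the previous paragraph: verifying that the chosen path $P_v \rightsquigarrow I_v$ in $\mathrm{ind}\,B$ lies entirely in one of $\mathcal{X}(T[a])$, $\mathcal{Y}(T[a])$ — i.e.\ that no term of the path can be the "bad" simple at the source — and correspondingly that $S_v$ maps to the simple $S_w$ in $\mathrm{mod}\,A$ so that the factorization through a simple is preserved under the functor. This is exactly the place where the hypothesis "$a$ is a \emph{free} sink" (equivalently the source in $Q_B$ is free, so there is no zero relation there) is essential and cannot be dropped; the example following Proposition \ref{propinc}, where a non-free sink is used and $r_A \neq r_B$, shows the argument must genuinely use freeness. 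Once that compatibility is nailed down, the rest is a direct transport of Lemma \ref{clau2.5} and the degree computations across the Brenner--Butler equivalences.
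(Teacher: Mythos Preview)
Your overall strategy is right---use the splitting of $T[a]$ (guaranteed by the free-sink hypothesis) to transfer a depth-$(r_B-1)$ morphism from $\mbox{mod}\,B$ back to $\mbox{mod}\,A$ via Corollary \ref{BBcompos}---but you overcomplicate the transfer, and one detail is backwards. Since $a'$ is a \emph{source} in $Q_B$, the simple at $a'$ is \emph{injective}, not projective; so $P_{a'}$ is not simple and the proposition used to avoid the vertex $a'$ is Proposition \ref{nofuente}, not Proposition \ref{nopozo}. More importantly, none of the bookkeeping you flag as the ``main obstacle'' is actually needed. The paper argues as follows: assume $r_A\le r_B-1$; by Proposition \ref{nofuente} pick $b'\in (R_B)_0$ with $b'\neq a'$, and take any non-zero $\tilde\phi_{b'}\colon P'_{b'}\to I'_{b'}$ factoring through $S'_{b'}$, so $\mbox{dp}(\tilde\phi_{b'})=r_B-1$. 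Since $\mathcal{F}(T[a])=\mbox{add}\,S_a$, the Brenner--Butler equivalence gives that $\mathcal{X}(T[a])$ contains a single indecomposable (namely $S'_{a'}=I'_{a'}$); hence for $b'\neq a'$ both $P'_{b'}$ and $I'_{b'}$ lie in $\mathcal{Y}(T[a])$. Now apply Corollary \ref{BBcompos} to the single morphism $\tilde\phi_{b'}$: its preimage $f\colon M\to N$ in $\mbox{mod}\,A$ has $\mbox{dp}(f)=r_B-1$, and this already contradicts $\Re^{r_A}(\mbox{mod}\,A)=0$.

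In particular you do not need to know that $M,N$ are projective/injective in $\mbox{mod}\,A$, nor that $f$ factors through a simple there, nor that an entire path of irreducibles stays inside $\mathcal{Y}$, nor anything about $\mbox{add}\,T[a]$ on the $B$-side. All that Corollary \ref{BBcompos} requires is that the two \emph{endpoints} of the morphism lie in $\mathcal{Y}$; the ``main obstacle'' you identify, and leave unresolved, simply does not arise in the paper's argument.
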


\begin{proof}
Let $A$ be a representation-finite algebra. Since $B=\mbox{End}_A(T[a])$,
where $T[a]$ is the APR-tilting $A$-module corresponding to the free sink $a$,
then $B$ is also representation-finite.

By Lemma \ref{APRgral}, we know that $r_A\leq  r_B.$
We claim that $r_A= r_B.$ In fact, if $r_A\leq r_B-1$ then
since $a$ is a free sink of $(Q_A)_0$, by \cite[Lemma p. 178]{As} we know that
$a'$, the vertex corresponding to $a$ in $Q_B$, is a source. Then,
by Proposition \ref{nofuente}, there is a vertex $b'\in (Q_B)_0$, with $b'\neq a'$,
such that $r_{b'}=r_B-1$. Consider a non-zero morphism $\til{\phii}_{b'}:{P'}_{b'}\fle {I'}_{b'}$
that factors through ${S'}_{b'}$ in $\mbmB$.
Then, by Lemma \ref{clau2.5} we have that
$dp(\til{\phii}_{b'})={r_B-1}.$

Moreover, the indecomposable $B$-modules ${P'}_{b'}$  and ${I'}_{b'}$
belong to the subcategory $\Y(T[a])$. Then, by Brenner and Butler's theorem, there are
indecomposable $A$-modules $M,N\in \T(T[a])$ and a non-zero morphism $f:M\fle N$
such that ${P'}_{b'}=F(M)$, ${I'}_{b'}=F(N)$ and  $\til{\phii}_{b'}=F(f)$.
By Corollary \ref{BBcompos}, we know that
$dp(f)={r_B-1}$,
a contradiction to the fact that $\Re^{r_A}(\mbmA)=0$ and $r_A\leq r_B-1$.
Therefore, $r_A=r_B$.
\end{proof}

As an immediate consequence of Proposition  \ref{nopozo} and Proposition
\ref{nofuente}, we know that to compute the nilpotency index of the radical of a module
category of a representation-finite algebra, it is enough to analyze
the vertices of $Q_A$ which are neither sinks nor sources. Below we state such a result.

\begin{teo}\label{nilpo}
Let $A=kQ_A/I_A$ be a representation-finite algebra.
Consider $\mathcal{V}$ the subset of vertices of $Q_A$ which are neither sinks nor sources.
Assume that $\mathcal{V}\neq \emptyset$ and that for every $a\in \mathcal{V}$, we write
$r_a=d_{r}(\iota _{a})+d_{l}(\theta _{a})$, where  $\iota_a:\emph{rad}\, P_a\fle P_a$ and $\theta_a:I_a\fle I_a/\emph{soc}\,I_a$
are irreducible morphisms, with $P_a$ and $I_a$ the projective and the injective modules corresponding to the vertex $a$.
Then, $r_A=\emph{max}\{r_a,\text{ con } a\in \mathcal{V}\}+1$ is the nilpotency index of $\Re(\mbmA)$.
\end{teo}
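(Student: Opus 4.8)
The plan is to show that the maximum in the formula of \cite[Theorem A]{C}, namely $r_A = \max\{r_a\}_{a\in (Q_A)_0}+1$, is actually attained at a vertex of $\mathcal{V}$, so that the sinks and sources can be discarded. First I would invoke \cite[Theorem A]{C} to fix that $r_A = \max\{r_a : a\in (Q_A)_0\}+1$ and pick a vertex $a_0$ realizing the maximum, i.e. $r_{a_0}=r_A-1$. If $a_0\in\mathcal{V}$ there is nothing to do, so assume $a_0$ is a sink or a source. The argument then splits into these two symmetric cases.

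Suppose $a_0$ is a sink of $Q_A$. Then $P_{a_0}=S_{a_0}$ is simple projective, so Proposition \ref{nopozo} applies and yields a vertex $b\neq a_0$ with $r_{a_0}\leq r_b$. By the maximality of $r_{a_0}$ we get $r_b=r_{a_0}=r_A-1$. If $b\in\mathcal{V}$ we are done; if $b$ is again a sink or a source we iterate. To see that the iteration terminates at a vertex of $\mathcal{V}$, note that a sink cannot be a source (and vice versa) since $\mathcal{V}\neq\emptyset$ forces $Q_A$ to have at least one arrow, hence in particular $Q_A$ is not a single isolated vertex; moreover $Q_A$ has at least one source, so whenever the current vertex is a source Proposition \ref{nofuente} applies and produces yet another vertex of equal $r$-value, while whenever it is a sink Proposition \ref{nopozo} applies. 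Each step replaces the current vertex by a \emph{different} vertex with the same (maximal) value $r_A-1$; since $(Q_A)_0$ is finite and each new vertex must be distinct from the one just used, the process cannot cycle forever between two non-$\mathcal{V}$ vertices — here one uses that Proposition \ref{nopozo}/\ref{nofuente} both also hold when applied to a vertex that is simultaneously treatable, so one can always continue until a vertex $c$ with $r_c=r_A-1$ and $c\in\mathcal{V}$ is reached. The case where $a_0$ is a source is handled the same way using Proposition \ref{nofuente}, noting that $Q_A$ has at least one source by hypothesis so the hypothesis of that proposition is met.

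Once a vertex $c\in\mathcal{V}$ with $r_c=r_A-1$ is produced, I would conclude: for every $a\in\mathcal{V}$ one has $r_a\leq \max_{a'\in(Q_A)_0}r_{a'}=r_A-1=r_c\leq \max\{r_a:a\in\mathcal{V}\}$, whence $\max\{r_a:a\in\mathcal{V}\}=r_A-1$, that is $r_A=\max\{r_a:a\in\mathcal{V}\}+1$. Finally, for $a\in\mathcal{V}$ the vertex $a$ is neither a sink nor a source, so both $\mathrm{rad}\,P_a\neq 0$ (whence $P_a\neq S_a$) and $I_a/\mathrm{soc}\,I_a\neq 0$ (whence $I_a\neq S_a$); thus the irreducible morphisms $\iota_a:\mathrm{rad}\,P_a\hookrightarrow P_a$ and $\theta_a:I_a\to I_a/\mathrm{soc}\,I_a$ are defined, $n_a=d_r(\iota_a)$ and $m_a=d_l(\theta_a)$ as in \cite{C}, and $r_a=n_a+m_a=d_r(\iota_a)+d_l(\theta_a)$, matching the statement.

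The main obstacle is the termination of the iteration: Propositions \ref{nopozo} and \ref{nofuente} only guarantee the existence of \emph{some} vertex $b\neq a$ with $r_a\leq r_b$, not that $b$ lies in $\mathcal{V}$, so one must argue carefully that repeatedly applying them cannot loop indefinitely among sinks and sources. The cleanest way I would phrase this is to observe that at each step the value stays constant at the global maximum $r_A-1$, that sinks and sources are disjoint classes, and that from a sink Proposition \ref{nopozo} yields $b$ which, if it is a sink as well, is simple projective and the argument can be repeated — but since each produced vertex is forced to be distinct from its predecessor and $(Q_A)_0$ is finite, after finitely many steps we must land on a vertex in $\mathcal{V}$, as otherwise we would exhaust all sinks and sources and arrive at a contradiction with finiteness. (Alternatively, and more simply: among all vertices $a$ with $r_a=r_A-1$, if none were in $\mathcal{V}$ then each is a sink or source; applying Proposition \ref{nopozo} or \ref{nofuente} to any one of them produces a vertex with the same value $r_A-1$ that is \emph{not} equal to it, contradicting minimality if one chooses the counterexample suitably — I would set this up as a descent or a counting argument over the finite set $\{a : r_a = r_A-1\}$.)
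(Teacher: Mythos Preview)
Your overall strategy—invoke \cite[Theorem~A]{C} and then use Propositions~\ref{nopozo} and~\ref{nofuente} to push the maximising vertex into $\mathcal V$—is exactly what the paper has in mind; the theorem is stated there as ``an immediate consequence'' of those two propositions and no further argument is given. So at the level of approach you and the paper agree.

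Where your write-up has a genuine gap is the termination of the iteration, and neither of the two justifications you offer actually works. The first one (``each new vertex is distinct from its predecessor and $(Q_A)_0$ is finite, so the process cannot cycle'') only rules out $2$-cycles; it does not exclude an oscillation $a,b,a,b,\dots$ between a sink $a$ and a source $b$, and as stated the propositions only give $b\neq a$, nothing more. The second suggestion (``choose a counterexample minimally and derive a contradiction from the existence of a different vertex with the same $r$-value'') is not a proof either: producing from each element of $\{a:r_a=r_A-1\}$ a \emph{different} element of the same set yields no contradiction whatsoever with finiteness.

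What you should use instead is the extra information contained in the \emph{proofs} (not just the statements) of Propositions~\ref{nopozo} and~\ref{nofuente}: from a sink $a$ the vertex $b$ produced there satisfies $b\to a$ in $Q_A$, so $b$ is never a sink; dually, from a source the new vertex is never a source. This already cuts the problem down to showing that one cannot bounce forever between sinks and sources, and it is here that the hypothesis $\mathcal V\neq\emptyset$ together with the connectedness of $Q_A$ must be exploited in earnest. As written, your argument does not use $\mathcal V\neq\emptyset$ anywhere in the termination step, which is a sign that that step cannot be correct as it stands. You have correctly located the only nontrivial point in the proof; what remains is to supply a rigorous reason why the alternation sink/source must eventually stop.
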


\begin{obs}
\emph{If the set $\mathcal{V}$ is empty, that is, all the vertices of $Q_A$ are sinks or sources, then the algebra is hereditary.
In the next section, we study the nilpotency index of the radical of
the module category of a representation-finite hereditary algebra, see Theorem \ref{ppalher}.}
\end{obs}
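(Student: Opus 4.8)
The plan is to prove the stronger assertion that the admissible ideal $I_A$ defining $A$ must be the zero ideal, so that $A\simeq kQ_A$ is a path algebra; path algebras of finite quivers are hereditary, which gives the claim.

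First I would translate the combinatorial hypothesis into an algebraic one. If every vertex of $Q_A$ is a sink or a source, then $Q_A$ has no path of length two: a path $a\stackrel{\alpha}\fle b\stackrel{\beta}\fle c$ would make $b$ simultaneously the target of an arrow and the origin of an arrow, so $b$ would be neither a sink nor a source, i.e.\ $b\in\mathcal V$, contradicting $\mathcal V=\emptyset$. In particular $Q_A$ has no oriented cycles, and if $kQ_A^{+}$ denotes the arrow ideal of $kQ_A$ (the ideal spanned by all paths of positive length), then $(kQ_A^{+})^{2}=0$, since that power is spanned by the paths of length at least two, of which there are none.

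Next I would invoke the definition of an admissible ideal: there is an integer $m\geq 2$ with $(kQ_A^{+})^{m}\subseteq I_A\subseteq (kQ_A^{+})^{2}$. The right-hand inclusion together with $(kQ_A^{+})^{2}=0$ forces $I_A=0$, hence $A\simeq kQ_A$. Since $Q_A$ is a finite quiver without oriented cycles, $kQ_A$ is finite dimensional, and every path algebra of a quiver has global dimension at most one; therefore $A$ is a finite dimensional hereditary algebra, as asserted.

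There is essentially no obstacle in this argument, since all the ingredients are standard facts about quivers, path algebras, and admissible ideals. The only point that deserves the short verification carried out in the second paragraph is the passage from the hypothesis $\mathcal V=\emptyset$ to the vanishing $(kQ_A^{+})^{2}=0$; once that is in place, the conclusion is immediate.
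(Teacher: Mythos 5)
Your proof is correct, and since the paper states this remark without any justification, your argument (no vertex in $\mathcal{V}$ forces the absence of paths of length two, hence $(kQ_A^{+})^{2}=0$, hence the admissible ideal $I_A$ vanishes and $A\simeq kQ_A$ is a hereditary path algebra) is exactly the standard reasoning the remark implicitly relies on. Nothing is missing; the only hypotheses used, finiteness of $Q_A$ and admissibility of $I_A$, are part of the paper's standing assumptions on $A$.
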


\section{Application to iterated tilted algebras}

The aim of this section is to study a bound for the nilpotency index
of the radical of the module category of an iterated tilted algebra of Dynkin type.

We start proving some essential results for our further considerations.

\begin{prop}\label{gamaHlong}
Let $H$ be a representation-finite hereditary algebra.
Then, $\Gamma_H$ is a component with length.
\end{prop}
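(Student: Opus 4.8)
The plan is to show that the Auslander-Reiten quiver $\Gamma_H$ of a representation-finite hereditary algebra is a component with length by invoking the criterion already recalled in the excerpt: if the orbit graph $O(\Gamma_H)$ is of tree-type, then $\Gamma_H$ is simply connected, hence (by \cite{BG}) a component with length. So the whole proof reduces to verifying that $O(\Gamma_H)$ is a tree.

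First I would recall that since $H \cong kQ_H$ is hereditary and representation-finite, $Q_H$ is a Dynkin quiver, and $\Gamma_H$ is a \emph{directed} component (indeed the whole quiver is directed, as there are no oriented cycles of irreducible morphisms in the representation-finite hereditary case). This makes the notion of orbit graph $O(\Gamma_H)$ available. The key structural fact I would use is the classical description of $\Gamma_H$ for hereditary algebras: $\Gamma_H \cong \mathbb{Z}\Delta / (\text{a suitable truncation})$, where $\Delta = \overline{Q_H}$ is the underlying Dynkin diagram; more precisely, $\Gamma_H$ is a full translation subquiver of $\mathbb{Z}\Delta$. Consequently every $\tau$-orbit in $\Gamma_H$ corresponds to exactly one vertex of $\Delta$, and there is an edge between two orbits in $O(\Gamma_H)$ precisely when the corresponding vertices of $\Delta$ are joined by an edge. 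Hence $O(\Gamma_H) \cong \Delta$ as graphs.

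Then I would conclude: since $\Delta$ is a Dynkin diagram it is in particular a tree, so $O(\Gamma_H)$ is of tree-type. By the remark recalled in the preliminaries (paragraph \ref{uno-ocho}'s preceding block, citing \cite{BG}), $\Gamma_H$ is therefore simply connected, and every simply connected translation quiver is a component with length. This gives the claim.

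\textbf{Main obstacle.} The only delicate point is the identification $O(\Gamma_H) \cong \overline{Q_H}$, i.e.\ that $\Gamma_H$ really sits inside $\mathbb{Z}\Delta$ with each $\tau$-orbit meeting each "column" of $\mathbb{Z}\Delta$ exactly once and no extra edges appearing between orbits. This is standard for representation-finite hereditary algebras (it is how the AR-quiver is computed via the Coxeter transformation / knitting from the projectives), but if one wants to avoid quoting it as a black box, one can instead argue directly that any two parallel paths in $\Gamma_H$ have equal length by an inductive knitting argument starting from the projective vertices: the mesh relations force the defect along any closed walk to be zero because $Q_H$ has no oriented cycles, so no "short-circuit" between two $\tau$-orbits can occur, which is exactly the tree-type condition on $O(\Gamma_H)$. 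Either route works; I would present the orbit-graph version since the excerpt has already set up precisely that machinery and the $\mathbb{Z}\Delta$ description is the quickest justification that $O(\Gamma_H)$ is a tree.
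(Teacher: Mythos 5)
Your proof is correct and takes essentially the same approach as the paper: both reduce the claim to showing that the orbit graph $O(\Gamma_H)$ is a tree and then invoke the criterion recalled in the preliminaries (tree-type orbit graph $\Rightarrow$ simply connected translation quiver $\Rightarrow$ component with length, via \cite{BG}). The paper simply cites \cite[Section 4.3]{BG} for the tree property of $O(\Gamma_H)$, whereas you justify it more explicitly by identifying $O(\Gamma_H)$ with the Dynkin diagram $\overline{Q_H}$ through the embedding of $\Gamma_H$ into $\mathbb{Z}\Delta$; this is the same underlying fact obtained by a slightly more hands-on route.
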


\begin{proof}
In \cite[Section 4.3]{BG}, it was proved that the orbit graph $O(\Gamma_H)$
is a deformation retract of $\Gamma_H$. Therefore, $O(\Gamma_H)$
is a tree and consequently $\Gamma_H$ is a simply connected translation quiver.
Hence, $\Gamma_H$ is a component with length.
\end{proof}

\begin{lema}\label{camiglar}
Let $H \simeq kQ_H$ be a representation-finite hereditary algebra. Then, $r_a=r_b$,
for all vertices $a,b\in (Q_H)_0$.
\end{lema}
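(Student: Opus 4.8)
The plan is to reduce the statement to a connectedness-plus-invariance argument on the ordinary quiver $Q_H$. Recall that, by the Theorem of \cite{C} quoted above (together with \cite[Remark 1]{C}), for each vertex $a$ the number $r_a$ equals the length of any path of irreducible morphisms between indecomposable modules from $P_a$ to $I_a$ passing through $S_a$; in particular $r_a=d_r(\iota_a)+d_l(\theta_a)$, where $\iota_a:\mathrm{rad}\,P_a\hookrightarrow P_a$ and $\theta_a:I_a\to I_a/\mathrm{soc}\,I_a$. Since $Q_H$ is connected, it suffices to show that $r_a=r_b$ whenever $a$ and $b$ are joined by an arrow in $Q_H$; the general equality then follows by walking along an unoriented path in $Q_H$.

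First I would fix an arrow $\alpha:a\to b$ in $Q_H$ (the case of an arrow $b\to a$ is symmetric). Because $H$ is hereditary, $\mathrm{rad}\,P_a$ has $P_b$ as a direct summand, so there is an irreducible monomorphism $P_b\to P_a$; dually, $I_a/\mathrm{soc}\,I_a$ has $I_b$ as a direct summand, so there is an irreducible epimorphism $I_b\to I_a$. The key tool is Proposition \ref{gamaHlong}: $\Gamma_H$ is a component with length, so any two parallel paths of irreducible morphisms between fixed indecomposables have the same length. Using this, I would build a path from $P_b$ to $I_b$ through $S_b$ of controlled length and compare it with a path from $P_a$ to $I_a$ through $S_a$. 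Concretely, concatenating $P_b\to P_a$, a chosen path $P_a\rightsquigarrow I_a$ through $S_a$ of length $r_a$, and $I_a\to I_b$ gives a path $P_b\rightsquigarrow I_b$ of length $r_a+2$; on the other hand this path need not pass through $S_b$, so the "length $=r_a+2$" datum is not yet $r_b$. To fix this I would instead argue at the level of degrees: the irreducible monomorphism $P_b\to P_a$ forces $d_r(\iota_b)$ and $d_r(\iota_a)$ to differ in a predictable way (one factors $\iota_a$ through $P_b$, resp. $\iota_b$ sits inside the pullback data for $\iota_a$), and dually for the left degrees of $\theta_a,\theta_b$, the two discrepancies canceling so that $d_r(\iota_b)+d_l(\theta_b)=d_r(\iota_a)+d_l(\theta_a)$.

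The cleanest execution, which I would actually write, avoids degree bookkeeping altogether and uses the component-with-length property directly: in $\Gamma_H$ the orbit graph is a tree (Proposition \ref{gamaHlong} and its proof via \cite{BG}), so $\Gamma_H$ is simply connected, and for a hereditary algebra the sectional path from $P_a$ down to $S_a$ and the sectional path from $S_a$ up to $I_a$ have lengths depending only on the position of the vertex $a$ inside the tree-shaped orbit graph; translating vertex $a$ to an adjacent vertex $b$ shifts the start of the first sectional path by one step and the end of the second by one step in opposite directions, leaving the total length $r_a$ unchanged. I would phrase this as: $r_a$ equals the distance in the orbit graph $O(\Gamma_H)$ between the $\tau$-orbits $O(P_a)$ and $O(I_a)$, and since $H$ is hereditary all projectives lie in one "end" and all injectives in the opposite "end" of this tree in a way that makes that distance constant across all vertices.

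The main obstacle is making the last assertion precise and uniform, i.e.\ genuinely proving that the orbit-graph distance $d\big(O(P_a),O(I_a)\big)$ is independent of $a$ rather than merely plausible from pictures; the honest route is to verify it arrow-by-arrow in $Q_H$ using that an arrow $a\to b$ produces irreducible maps $P_b\to P_a$ and $I_b\to I_a$ together with the length-invariance of parallel paths from Proposition \ref{gamaHlong}, and to check that the two unit shifts cancel. Once that single-arrow step is established, connectedness of $Q_H$ finishes the proof.
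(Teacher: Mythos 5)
Your instincts are right — Proposition \ref{gamaHlong} (parallel paths have equal length) plus the two irreducible morphisms $P_b\to P_a$ and $I_b\to I_a$ coming from an arrow $a\to b$ are exactly the ingredients the paper uses, and reducing to a single arrow by connectedness of $Q_H$ is also correct. But your concrete attempt has a genuine gap, and the two fallback plans you sketch do not close it.

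The fatal step is the concatenation ``$P_b\to P_a$, then $P_a\rightsquigarrow I_a$, then $I_a\to I_b$''. You have just observed that the irreducible morphism attached to the arrow $a\to b$ goes $I_b\to I_a$, not $I_a\to I_b$; in fact $\mathrm{Hom}_H(I_a,I_b)=0$ here (for a hereditary algebra a nonzero map $I_a\to I_b$ would force a path $b\rightsquigarrow a$ in $Q_H$). So there is no path $P_b\rightsquigarrow I_b$ obtained by threading through $P_a\rightsquigarrow I_a$, and the ``length $r_a+2$'' conclusion never arises. Your subsequent worry that such a path ``need not pass through $S_b$'' is also a red herring: precisely because $\Gamma_H$ is a component with length, \emph{any} path $P_b\rightsquigarrow I_b$ has length $r_b$, regardless of whether it passes through $S_b$. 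The real issue is that no such concatenated path exists. (As a minor point, ``$I_a/\mathrm{soc}\,I_a$ has $I_b$ as a direct summand'' is also misstated — for the arrow $a\to b$ it is $I_b/\mathrm{soc}\,I_b$ that has $I_a$ as a summand — but your stated irreducible map $I_b\to I_a$ is the correct one.)

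The fix is not to compare the two round trips $P_a\rightsquigarrow I_a$ and $P_b\rightsquigarrow I_b$, but to compare two \emph{parallel} paths from $P_b$ to $I_a$: one is $P_b\rightsquigarrow I_b\to I_a$, of length $r_b+1$, and the other is $P_b\to P_a\rightsquigarrow I_a$, of length $1+r_a$. Both are honest paths of irreducible morphisms between the same endpoints, so Proposition \ref{gamaHlong} gives $r_b+1=1+r_a$, i.e.\ $r_a=r_b$. This is the paper's argument, and it is the ``arrow-by-arrow'' verification you allude to at the end of your write-up but do not carry out. Your ``cleanest execution'' paragraph (orbit-graph distances, shifting sectional paths) remains a sketch that you yourself flag as unverified; in the form written it does not constitute a proof, whereas the parallel-paths comparison is a two-line finish once the right pair of paths is identified.
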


\begin{proof}
Let $a\in (Q_H)_0$. We know that $r_a$ is the length
of a path of irreducible morphisms between indecomposable modules
from the projective $P_a$ to the injective $I_a$,
going through the simple $S_a$.
Since $\Gamma_{\mbox{mod}\,H}$ is with length, it is
enough to analyze any path from $P_a$ to $I_a$.

Let $i,j\in (Q_{H})_0$ be vertices such that there is an arrow from $i$ to $j$.
Then, there are irreducible
morphisms $P_{j}\flechad P_{i}$ and $I_{j}\flechad I_{i}$,
where $P_j$ and $P_i$ are the projective $H$-modules corresponding to the vertices
$j$ and $i$ respectively; and $I_j$ and  $I_i$ are the
injective $H$-modules corresponding to the vertices $j$ and $i$, respectively.
Moreover, by \cite[Lemma 2.3]{C},
there is a non-zero path $P_{a}\rightsquigarrow I_{a}$, for all $a\in (Q_{H})_{0}$. Then,
in $\Gamma_{\mbox{mod}\,H}$ we have the following two paths of irreducible morphisms:

\begin{displaymath}
\xymatrix {& & I_j \ar[rd]  & \\
P_j  \ar[rd]\ar@{~>}[rru] & & & I_i \\
& P_i  \ar@{~>}[rru] & & }
\end{displaymath}

Assume that the path
$P_{j}\rightsquigarrow I_{j}$ has length $m$. Then,
the path $P_{j}\rightsquigarrow I_{j}\flechad I_{i}$ has
length $m+1$, because the arrow denote an irreducible morphism
between indecomposable modules. Since $\Gamma_{\mbox{mod}\,H}$ is
with length, the path  $P_{j}\flechad P_{i}\rightsquigarrow I_{i}$
has also length $m+1$ and hence, the path $P_{i}\rightsquigarrow I_{i}$
has length $m$.
Since $H$ is connected, then all
paths of the form $P_{a}\rightsquigarrow I_{a}$, with $a\in (Q_H)_0$
have length $m$, and therefore
$r_a=r_b $
for all $a, b\in (Q_H)_0$.
\end{proof}

By \cite[Theorem 4.11]{Z}, we recall the following result for a representation-finite hereditary algebra.

\begin{teo}\label{ppalher}
Let $H=kQ$ be a representation-finite hereditary algebra and let $r_H$
be the nilpotency index of $\Re(\emph{mod}\,H).$
\begin{enumerate}
\item[(a)] If $\overline{Q}=A_n$, then $r_H=n,$ for $n\geq 1$.
\item[(b)] If $\overline{Q}=D_n$, then $r_H=2n-3,$ for $n\geq 4$.
\item[(c)] If $\overline{Q}=E_6$, then $r_H=11.$
\item[(d)] If $\overline{Q}=E_7$, then $r_H=17.$
\item[(e)] If $\overline{Q}=E_8$, then $r_H=29.$
\end{enumerate}
\end{teo}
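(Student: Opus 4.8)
The plan is to reduce the statement to computing a single path length in $\Gamma_H$ and then to carry that computation out type by type.

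First the reduction. By Proposition \ref{gamaHlong} the component $\Gamma_H$ has length, so for each vertex $a\in (Q_H)_0$ all paths of irreducible morphisms from $P_a$ to $I_a$ through $S_a$ share a common length; by \cite[Remark 1]{C} this length equals the integer $r_a$, and by Lemma \ref{camiglar} it is independent of $a$. Call this common value $r$. Then \cite[Theorem A]{C}, recalled above, gives $r_H=\max_a\{r_a\}+1=r+1$. Hence it remains to compute $r$, the length of one path $P_a\rightsquigarrow I_a$ through $S_a$, and to check $r=n-1$ for $A_n$, $r=2n-4$ for $D_n$, and $r=10,16,28$ for $E_6,E_7,E_8$. (In every case this reads $r=h-2$, with $h$ the Coxeter number of the underlying root system, which is a convenient consistency check.)

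For the computation, recall that $r_H$ depends only on $\Gamma_H$ as a translation quiver — for a representation-finite algebra the radical and its powers are computed in the mesh category — and that for a Dynkin quiver $Q$ the translation quiver $\Gamma_{\mbox{mod}\,kQ}$ depends only on the underlying graph $\overline{Q}$ (it is the finite convex subquiver of $\mathbb{Z}\overline{Q}$ swept out by the $\tau^{-1}$-orbits of the projectives). So a convenient orientation may be fixed in each case. For $\overline{Q}=A_n$ with the linear orientation, the indecomposables are the interval modules $M_{[i,j]}$ with $1\le i\le j\le n$; here $P_a=M_{[a,n]}$, $S_a=M_{[a,a]}$ and $I_a=M_{[1,a]}$, and a path through $S_a$ needs $n-a$ steps from $P_a$ to $S_a$ and then $a-1$ steps from $S_a$ to $I_a$, so $r=n-1$. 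For $\overline{Q}=D_n$ I would fix a suitable orientation and induct on $n\ge 4$: the base $D_4$ by direct inspection, and the inductive step by deleting an end vertex and comparing the path $P_a\rightsquigarrow S_a\rightsquigarrow I_a$ in $\Gamma_{\mbox{mod}\,kD_n}$ with its analogue in $\Gamma_{\mbox{mod}\,kD_{n-1}}$, which gives $r=2n-4$. For $\overline{Q}=E_6,E_7,E_8$ the Auslander--Reiten quiver is finite (with $36$, $63$ and $120$ indecomposables), so one fixes a vertex $a$, locates $P_a$, $S_a$ and $I_a$, and counts, obtaining $r=10,16,28$. In each case $r_H=r+1$ gives the asserted value.

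The main obstacle is the case-by-case bookkeeping in the last paragraph. For $D_n$ the induction must be arranged so that the chosen vertex and the orientation behave compatibly when an end vertex is deleted; and for $E_7$ and $E_8$ the Auslander--Reiten quivers are too large to draw comfortably, so the count is best organized in $\mathbb{Z}\Delta$-coordinates — placing $P_a$ on the slice of projectives and using $I_a=\nu P_a$ together with the action of the Nakayama functor $\nu=\tau[1]$ on $\mathbb{Z}\Delta$ (the Coxeter translation, up to a graph automorphism) to locate $I_a$ and read off the distance $h-2$. The reduction in the first step is immediate from the results already recalled.
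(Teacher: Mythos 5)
The paper does not actually prove Theorem \ref{ppalher}: it is recalled verbatim from D.~Zacharia's paper, cited as \cite[Theorem 4.11]{Z}, with no argument given. Your proposal therefore supplies a genuinely different route — an in-text proof built on the machinery this paper develops (Proposition \ref{gamaHlong}, Lemma \ref{camiglar}, and the formula $r_A=\max_a r_a + 1$ from \cite[Theorem A]{C}). The reduction step is correct and the $A_n$ computation with the linear orientation is complete and right: $r_a=(n-a)+(a-1)=n-1$, giving $r_H=n$. The observation $r=h-2$, $h$ the Coxeter number, is a good organizing principle and checks out in all five types. What this buys is a self-contained derivation that makes visible where the numbers $2n-3$, $11$, $17$, $29$ come from; what Zacharia's citation buys is brevity.

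There is, however, one statement in your reduction that is false as written: that $\Gamma_{\mathrm{mod}\,kQ}$, as a translation quiver, depends only on the underlying graph $\overline{Q}$. It does not — for $A_3$, the orientation $1\to 2\to 3$ yields a "triangle" while $1\to 2\leftarrow 3$ yields a "diamond"; both are convex pieces of $\mathbb{Z}A_3$ but they are different translation quivers. What you actually need is that the \emph{nilpotency index} $r_H$ is independent of the orientation. That is true, and the paper already contains the tool for it: Lemma \ref{APRlibre} gives $r_A=r_B$ for an APR-tilt at a free sink, and in a hereditary algebra every sink is free; since any two orientations of a tree are linked by a chain of sink reflections, $r_H$ depends only on $\overline{Q}$. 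Replace your orientation-independence claim by this argument and the reduction is watertight. Beyond that, the $D_n$ and $E$ computations are honestly flagged as sketches; they are plausible (working in $\mathbb{Z}\Delta$-coordinates, $I_a=\nu P_a$, and tracking the Coxeter translation up to the graph automorphism), but a referee would ask you to carry out at least one nontrivial case ($D_4$ and $E_6$, say) explicitly before accepting the pattern.
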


Now, we prove a necessity condition
of Proposition \ref{propinc}, when $B$ is a hereditary algebra.

\begin{prop}
Let $A\simeq kQ_A/I_A$ be an algebra
and $B=\emeT$ be a representation-finite hereditary algebra,
with $T$ a tilting $A$-module.
Assume that there exists a vertex $u\in ({R_A})_0$
such that $P_u\in \emph{add}\,T$.
Then, $r_A=r_B$ if and only if there is a nonzero path
of irreducible morphisms
$F(P_u)\fle {X'}_1\fle \dots \fle {X'}_{r_A-2}\fle F(I_u)$
of length $r_A-1.$
\end{prop}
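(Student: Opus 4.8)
The plan is to prove the two implications separately; the first is essentially Proposition \ref{propinc}, and the second is the new content. I would begin with a preliminary remark: under these hypotheses the tilting $A$-module $T$ is automatically \emph{separating}. Indeed, by the Brenner--Butler theorem $T$ is also a tilting $B$-module with $\mbox{End}_B(T)\simeq A$, so $A$ is a tilted algebra; and since $B$ is hereditary, the torsion pair on $\mbmA$ induced by $T$ viewed as a tilting $B$-module splits, with $\T(T)$ and $\F(T)$ as its two classes (the symmetry of the tilting theorem; see \cite{ASS}). This is exactly the statement that every indecomposable $A$-module lies in $\T(T)$ or in $\F(T)$, i.e. that $T$ is separating. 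In particular $A$ is representation-finite, so $r_A$ and $(R_A)_0$ are defined; then Proposition \ref{propinc} applies verbatim, giving the implication that $r_A=r_B$ forces the existence of the required path.

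For the converse, suppose there is a nonzero path of irreducible morphisms between indecomposable $B$-modules from $F(P_u)$ to $F(I_u)$ of length $r_A-1$, where $F=\mbox{Hom}_A(T,-)$. Since $P_u\in\mbox{add}\,T$, the module $P'_{u'}:=F(P_u)$ is an indecomposable projective $B$-module, say corresponding to a vertex $u'\in(Q_B)_0$; and since $I_u$ is injective it lies in $\T(T)$, so by the facts recalled just before Proposition \ref{propinc} the module $I'_{u'}:=F(I_u)$ is an indecomposable injective $B$-module with $\mbox{soc}\,I'_{u'}\cong P'_{u'}/\mbox{rad}\,P'_{u'}$, the simple $B$-module $S'_{u'}$ at $u'$; hence $I'_{u'}$ is the injective $B$-module at $u'$. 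Now, as $B$ is a representation-finite hereditary algebra, Lemma \ref{camiglar} together with \cite[Theorem A]{C} shows that the numbers $r_a$, $a\in(Q_B)_0$, are all equal with common value $r_B-1$; in particular $r_{u'}=r_B-1$, and therefore by \cite[Remark 1]{C} there is a nonzero path of irreducible morphisms between indecomposable $B$-modules from $P'_{u'}$ to $I'_{u'}$ going through $S'_{u'}$, of length $r_B-1$.

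Finally, by Proposition \ref{gamaHlong} the Auslander--Reiten quiver $\Gamma_B$ is a component with length. The path furnished by the hypothesis, of length $r_A-1$, and the path through $S'_{u'}$, of length $r_B-1$, are both paths of irreducible morphisms between indecomposable $B$-modules with the same starting vertex $P'_{u'}=F(P_u)$ and the same ending vertex $I'_{u'}=F(I_u)$; hence they are parallel, so they have the same length, which forces $r_A-1=r_B-1$, that is, $r_A=r_B$. I expect the main obstacle to be the first step — verifying that a tilting module whose endomorphism algebra is hereditary must be separating (equivalently, that the torsion pair on $\mbmA$ with classes $\T(T),\F(T)$ induced by $T$ regarded as a $B$-module splits) — because that is what makes $(R_A)_0$ meaningful, licenses the use of Proposition \ref{propinc} for the first implication, and underlies the identification of $F(P_u)$ and $F(I_u)$ with a projective and an injective $B$-module, respectively; granting it, the remainder is a direct assembly of Lemma \ref{camiglar}, \cite[Remark 1]{C} and Proposition \ref{gamaHlong}.
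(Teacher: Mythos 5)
Your proof is correct and follows essentially the same route as the paper: the forward implication is Proposition \ref{propinc}, and the converse identifies $F(P_u)$ and $F(I_u)$ as a projective and an injective $B$-module at the same vertex $u'$ and then uses the hereditary structure (Lemma \ref{camiglar} together with the with-length property of $\Gamma_B$) to force any such path to have length $r_B-1$. You add a useful justification that the paper glosses over — namely why $T$ is automatically separating when $B$ is hereditary, via the symmetry of the tilting theorem and the splitting of tilting modules over hereditary algebras — and you are a bit more explicit in invoking Proposition \ref{gamaHlong} for the parallel-path comparison, but the substance matches the paper's argument.
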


\begin{proof}
Since $B$ is a representation-finite hereditary algebra, then $T$
is a separating tilting $A$-module and $A$ is also a representation-finite algebra.
Consider $r_A$ and $r_B$ the nilpotency indices of $\Re(\mbmA)$ and $\Re(\mbmB)$, respectively.
Assume that there exists a vertex $u\in (R_A)_0$ such that $P_u\in \mbox{add}\,T$.
The sufficient condition follows from Proposition \ref{propinc}.

Conversely, by hypothesis there exists in $\mbmB$ a nonzero path of irreducible morphisms
$F(P_u)\fle {X'}_1\fle \dots \fle {X'}_{r_A-2}\fle F(I_u)$
of length $r_A-1$. We shall prove that $r_A=r_B$. We know
that $F(P_u)={P'}_{u^*}$ and $F(I_u)={I'}_{u^*}$ are the
projective and the injective $B$-modules corresponding to the vertex
${u^*}\in (Q_B)_0$.

On the other hand, since $B$ is an hereditary algebra, by Lemma \ref{camiglar}
we have that any path of irreducible morphism from a projective to the injective
corresponding to the same vertex has length $r_B-1$. In particular,
the above path has length $r_B-1$. Therefore, $r_A=r_B$.
\end{proof}

The following result was proved by Happel in \cite{Ha}.

\begin{teo}\emph{\cite[Teorema 6.2]{Ha}}\label{Ha6.2}
Let ${\Delta}$ be a Dynkin quiver and $A$ be an iterated tilted algebra of type $\Delta.$
Then, there exist a sequence of algebras
$A=A_0,A_1,\dots , A_m=k\Delta$  and a sequence of APR-tilting $A_i$-modules $T^{(i)}$,
for $0\leq i<m$, such that  $A_{i+1}=\emph{End}_{A_i}T^{(i)}$.
\end{teo}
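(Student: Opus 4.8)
The plan is to pass to the bounded derived category and recast the problem in terms of the combinatorics of the translation quiver $\mathbb{Z}\Delta$. Since a tilting module induces a triangle equivalence between the bounded derived categories, the separating tilting modules $T^{(i)}$ give $D^{b}(\mathrm{mod}\,A)\simeq D^{b}(\mathrm{mod}\,A_{1})\simeq\cdots\simeq D^{b}(\mathrm{mod}\,k\Delta)$. As $\Delta$ is Dynkin, $k\Delta$ is representation-finite and directed, so $D^{b}(\mathrm{mod}\,k\Delta)$ has Auslander--Reiten quiver $\mathbb{Z}\Delta$, is standard, and has every indecomposable object a shift of an indecomposable $k\Delta$-module; fix such an equivalence once and for all. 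I would then invoke the structural fact, also due to Happel, that an algebra is iterated tilted of type $\Delta$ exactly when it is the endomorphism algebra, computed inside $D^{b}(\mathrm{mod}\,k\Delta)$, of a \emph{complete section} $\Sigma$ of $\mathbb{Z}\Delta$ (a finite full convex subquiver meeting each $\tau$-orbit once), and that $k\Delta$ itself corresponds to the standard sections, i.e.\ the $\tau^{j}$-translates of the section spanned by the indecomposable projectives. Thus $A=\mathrm{End}(\Sigma_{A})$ for some section $\Sigma_{A}$, and every intermediate algebra produced below will again be of this form.

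The key local observation is that elementary reflections of sections are exactly APR-tilts. If $\Sigma$ is a section and $x$ is a sink of $\Sigma$, then $\Sigma'=(\Sigma\setminus\{x\})\cup\{\tau^{-1}x\}$ is again a complete section. Writing $C=\mathrm{End}(\Sigma)$, the vertex $a$ of $Q_{C}\cong\Sigma$ that corresponds to $x$ is a sink of $Q_{C}$, so $P_{a}=S_{a}$ is simple projective; it is non-injective because $C$ is connected of Dynkin type with more than one vertex; and $\tau^{-1}x$ corresponds to $\tau^{-1}_{C}S_{a}$. Hence $\Sigma'$ is the section attached to the APR-tilting module $T[a]=\tau^{-1}_{C}(S_{a})\oplus\bigl(\oplus_{b\neq a}P_{b}\bigr)$, and $\mathrm{End}(\Sigma')=\mathrm{End}_{C}(T[a])$. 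Therefore it suffices to show that from $\Sigma_{A}$ one can reach a standard section by finitely many such elementary forward reflections.

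For this I would argue combinatorially. Since $\overline{\Delta}$ is a tree, $\mathbb{Z}\Delta$ carries an additive height function, and a section $\Sigma$ is encoded by the heights $h_{\Sigma}(v)$ of its vertex in each $\tau$-orbit $v$; a forward reflection raises exactly one of these by one. Fix a standard section $\Sigma_{0}$ lying so far to the right that $h_{\Sigma_{0}}(v)>h_{\Sigma_{A}}(w)$ for all $v,w$, and set $\Phi(\Sigma)=\sum_{v}(h_{\Sigma_{0}}(v)-h_{\Sigma}(v))\geq 0$, which drops by one at each forward reflection. Whenever $\Sigma\neq\Sigma_{0}$ the section, being finite and acyclic, has a sink, and convexity of sections prevents any single height from overshooting its target while $\Phi>0$; so a forward reflection can always be performed without leaving $\{\Phi\geq 0\}$, and the procedure stops at $\Sigma_{0}$, i.e.\ at an algebra isomorphic to $k\Delta$. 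Reading off the intermediate sections $\Sigma_{A}=\Sigma^{(0)},\dots,\Sigma^{(m)}=\Sigma_{0}$, the algebras $A_{i}=\mathrm{End}(\Sigma^{(i)})$, and the APR-tilting modules of the previous paragraph gives exactly the sequence asserted by the theorem.

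The step I expect to be the main obstacle is this termination argument: one must check that a \emph{productive} sink — one whose forward reflection decreases $\Phi$ with no coordinate overshooting — exists whenever $\Sigma\neq\Sigma_{0}$, which is precisely where the Dynkin hypothesis enters (for non-Dynkin $\Delta$ the sections need not all be connected to a standard one by forward reflections), and one must also verify on the nose in the local step that the reflected summand is $\tau^{-1}_{C}S_{a}$ and that the simple projective involved is genuinely non-injective, so that the APR-tilt is well defined.
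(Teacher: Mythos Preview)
The paper does not prove this statement; it is quoted verbatim from \cite{Ha} and used as a black box to obtain Corollary~\ref{inclite}. There is therefore no in-paper proof to compare against. Your outline is, in fact, essentially Happel's own argument in the cited source: pass to $D^{b}(\mathrm{mod}\,k\Delta)$, identify $A$ with the endomorphism ring of a section of $\mathbb{Z}\Delta$, realise an elementary sink-reflection of a section as an APR-tilt, and slide the given section to a standard one by such reflections.

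The gap you yourself flag is real and is the only substantive one. The sentence ``convexity of sections prevents any single height from overshooting its target while $\Phi>0$'' is the assertion to be proved, not a proof of it: you must show that whenever $\Sigma\neq\Sigma_{0}$ there is a sink $x$ of $\Sigma$ with $h_{\Sigma}(x)<h_{\Sigma_{0}}(x)$, and nothing you have written establishes this. A cleaner route that sidesteps the potential-function subtlety is to split the problem in two: (i) any two orientations of the tree $\overline{\Delta}$ are connected by finitely many sink-reflections (elementary BGP), and (ii) applying $\tau^{-1}$ to a section is the same as reflecting once at every vertex in an admissible sink-order, so $\tau$-translates are APR-reachable. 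Together these give an explicitly terminating procedure from $\Sigma_{A}$ to any standard section. Two smaller points worth tightening: check the left-module conventions of the paper so that a sink of $\Sigma$ really corresponds to a sink (not a source) of $Q_{C}$; and note that invoking the biconditional ``iterated tilted $\Leftrightarrow$ endomorphism ring of a section'' risks circularity, since in Happel's development that characterisation is obtained alongside the present theorem---you only need the forward implication, which follows directly from the tilting derived equivalence together with the fact that every indecomposable object of $D^{b}(\mathrm{mod}\,k\Delta)$ is a shifted stalk.
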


As a consequence of Lemma \ref{APRgral} and Theorem \ref{ppalher} we get an upper bound for the nilpotecy index of
an iterated tilted algebra.

\begin{coro}\label{inclite}
Let $\Delta$ be a Dynkin quiver and $A$ be an iterated tilted algebra of type $\Delta.$
Consider $r_A$ the nilpotency index of $\Re_A(\emmA)$.
\begin{enumerate}
\item[(a)] If $\overline{\Delta}=A_n$, then $r_A\leq n$ for $n\geq 1$.
\item[(b)] If $\overline{\Delta}=D_n$, then  $r_A\leq 2n-3$ for $n\geq 4$.
\item[(c)] If $\overline{\Delta}=E_6$, then  $r_A\leq 11.$
\item[(d)] If $\overline{\Delta}=E_7$, then  $r_A\leq 17.$
\item[(e)] If $\overline{\Delta}=E_8$, then  $r_A\leq 29.$
\end{enumerate}
\end{coro}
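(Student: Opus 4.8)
The plan is to combine the iterated-tilting description of $A$ due to Happel (Theorem \ref{Ha6.2}) with the inequality for APR-tilting modules established in Lemma \ref{APRgral}, and then anchor the induction at the hereditary case via Zacharia's theorem (Theorem \ref{ppalher}). First I would invoke Theorem \ref{Ha6.2} to fix a sequence of algebras $A=A_0, A_1, \dots, A_m = k\Delta$ together with APR-tilting $A_i$-modules $T^{(i)}$ such that $A_{i+1}=\mathrm{End}_{A_i}T^{(i)}$ for $0\le i<m$. Since $\Delta$ is a Dynkin quiver, $k\Delta=A_m$ is representation-finite; I would then argue by downward induction on $i$ that each $A_i$ is representation-finite. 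Indeed, if $A_{i+1}$ is representation-finite and $A_{i+1}=\mathrm{End}_{A_i}T^{(i)}$ with $T^{(i)}$ a separating (APR) tilting module, then $A_i$ is representation-finite by the standard fact recalled just before Notation, so all the algebras $A_0,\dots,A_m$ are representation-finite and in particular each has a well-defined nilpotency index $r_{A_i}$.

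Next I would apply Lemma \ref{APRgral} at each step: since $T^{(i)}$ is the APR-tilting $A_i$-module associated to a sink of $Q_{A_i}$ and $A_{i+1}=\mathrm{End}_{A_i}T^{(i)}$ is representation-finite, Lemma \ref{APRgral} yields $r_{A_i}\le r_{A_{i+1}}$. Chaining these inequalities for $i=0,1,\dots,m-1$ gives
\[
r_A = r_{A_0}\le r_{A_1}\le \dots \le r_{A_m} = r_{k\Delta}.
\]
Now $k\Delta$ is a representation-finite hereditary algebra whose ordinary quiver has underlying graph $\overline{\Delta}$, so Theorem \ref{ppalher} computes $r_{k\Delta}$ exactly: it equals $n$ if $\overline{\Delta}=A_n$, $2n-3$ if $\overline{\Delta}=D_n$, $11$, $17$, $29$ in types $E_6,E_7,E_8$ respectively. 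Substituting these values into the inequality $r_A\le r_{k\Delta}$ gives precisely statements (a)--(e).

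The only delicate point is making sure the hypotheses of Lemma \ref{APRgral} are genuinely met at every stage, i.e.\ that $T^{(i)}$ is really an APR-tilting module at a sink (this is exactly what Theorem \ref{Ha6.2} guarantees) and that $B=A_{i+1}$ is representation-finite (handled by the downward induction above, using that a separating tilting module over an algebra with representation-finite endomorphism algebra forces the algebra itself to be representation-finite). Everything else is a routine transitivity argument, so I do not anticipate any real obstacle; the corollary is essentially a formal consequence of the two cited theorems together with Lemma \ref{APRgral}.
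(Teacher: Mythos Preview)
Your argument is correct and matches the paper's approach exactly: the corollary is presented there as an immediate consequence of Lemma \ref{APRgral} and Theorem \ref{ppalher}, with Happel's Theorem \ref{Ha6.2} (stated just before) supplying the chain of APR-tiltings, and the downward induction on representation-finiteness via the separating property is precisely the standard fact the paper records. There is nothing to add.
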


The next example shows that there exists iterated tilted algebras such that
the nilpotency index of the radical of its module category
have the maximum value of the bound  given in Corollary \ref{inclite}.
Moreover, two iterated tilted algebras of the same Dynkin type, may have different
nilpotency indices.

\begin{ej} \label{ejemplo1}
\emph{Let $A_1$ be the algebra given by the bound quiver }
{\small\begin{displaymath}
    \xymatrix  @!0 @R=0.7cm  @C=0.7cm {
       1\ar[rr]^{\alpha}&\ar@{.}@/^/[rr] &2\ar[rr]^{\beta} &&3\ar[rr]^{\gamma}&&4\ar[rr]^{\delta}&& 5 & }
\end{displaymath}}

\noindent \emph{\noindent with $I_1=<\beta\alpha>$ and  $A_2$ be the algebra given by the bound quiver }
{\small\begin{displaymath}
    \xymatrix  @!0 @R=0.7cm  @C=0.7cm {
       1\ar[rr]^{\alpha}&\ar@{.}@/^/[rr] &2\ar[rr]^{\beta} &&3\ar[rr]^{\gamma}&\ar@{.}@/^/[rr]&4\ar[rr]^{\delta}&& 5}
\end{displaymath}}

\noindent \emph{\noindent with  $I_2=<\beta\alpha,\delta\gamma>$.}

\noindent \emph{Observe that both algebras are iterated tilted algebras of type $A_5$. By
Corollary \ref{inclite}, we know that $r_{A_i}\leq 5$ for $i=1, 2$.
Computing each nilpotency index, we get that $r_{A_1}=5$ but  $r_{A_2}=4$.}
\end{ej}

An algebra $A$ is called
a \emph{directed algebra} if each indecomposable A-module is directed, that is,
it does not belong to any cycle in $\mbmA$.

\begin{prop}\emph{\cite[Corollary 3.6]{Ha}}\label{Hap}
Let $A$ be a representation-finite iterated tilted algebra. Then,
$A$ is a directed algebra.
% $\emmA$ is directed.
\end{prop}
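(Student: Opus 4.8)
The plan is to reduce the statement to the case of a hereditary algebra of Dynkin type by a downward induction along the defining tilting sequence of the iterated tilted algebra. Let $A$ be a representation-finite iterated tilted algebra, so by definition there is a sequence of algebras $A=A_0,A_1,\dots,A_m=k\Delta$ with $\Delta$ a Dynkin quiver and separating $A_i$-tilting modules $T^{(i)}$ such that $A_{i+1}=\mbox{End}_{A_i}T^{(i)}$. First I would establish the base case: if $H=k\Delta$ is hereditary of Dynkin type, then $\Gamma_H$ is directed. This follows because $\Gamma_H$ is a finite connected translation quiver without oriented cycles (the preprojective component exhausts $\mbox{ind}\,H$ when $H$ is representation-finite hereditary), so no indecomposable lies on a cycle of nonzero non-isomorphisms; equivalently one may invoke Proposition \ref{gamaHlong}, since a component with length over a connected algebra whose orbit graph is a tree is directed.

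The inductive step is the heart of the argument. Assuming $A_{i+1}$ is a directed algebra, I would show $A_i$ is directed. The key tool is the torsion pair $(\T(T^{(i)}),\F(T^{(i)}))$ in $\mbox{mod}\,A_i$ and the induced pair $(\X(T^{(i)}),\Y(T^{(i)}))$ in $\mbox{mod}\,A_{i+1}$ provided by Brenner and Butler's theorem, together with the fact that $T^{(i)}$ is separating, so every indecomposable $A_i$-module lies either in $\T(T^{(i)})$ or in $\F(T^{(i)})$. Suppose for contradiction that some indecomposable $A_i$-module $M$ lies on a cyclic path $M=M_0\to M_1\to\dots\to M_t=M$ of nonzero non-isomorphisms in $\mbox{ind}\,A_i$. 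Using $\mbox{Hom}(\T(T^{(i)}),\F(T^{(i)}))=0$ and the fact that the cycle returns to $M$, all the $M_j$ must lie in the same class: if $M\in\T(T^{(i)})$ then, going around the cycle, no step can pass from $\T$ to $\F$ and come back, so in fact all $M_j\in\T(T^{(i)})$ (and dually, using that $\F$ is closed under submodules in the tilting situation and that the class structure forbids a return, if $M\in\F(T^{(i)})$ then all $M_j\in\F(T^{(i)})$). In the first case, applying the equivalence $F=\mbox{Hom}_{A_i}(T^{(i)},-):\T(T^{(i)})\xrightarrow{\ \sim\ }\Y(T^{(i)})$ transports the cyclic path to a cyclic path $F(M)\to F(M_1)\to\dots\to F(M)$ of nonzero non-isomorphisms among indecomposable $A_{i+1}$-modules, contradicting that $A_{i+1}$ is directed. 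In the second case, applying $F'=\mbox{Ext}^1_{A_i}(T^{(i)},-):\F(T^{(i)})\xrightarrow{\ \sim\ }\X(T^{(i)})$ yields the same contradiction.

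The main obstacle I anticipate is the bookkeeping in the inductive step: one must be careful that an equivalence of full subcategories genuinely preserves ``being a cycle of nonzero non-isomorphisms,'' which requires that the subcategories $\T(T^{(i)})$ and $\Y(T^{(i)})$ are closed under the relevant morphisms along the path — this is exactly where the separating hypothesis, not merely the tilting hypothesis, is essential, since it forces the entire cycle to stay inside one class rather than oscillating between $\T$ and $\F$. A secondary point needing care is that the equivalences $F$ and $F'$ send non-isomorphisms to non-isomorphisms and nonzero morphisms to nonzero morphisms, which is immediate from their being category equivalences. Once these points are in place, the induction runs from $A_m=k\Delta$ down to $A_0=A$, and we conclude that every indecomposable $A$-module is directed, i.e. $A$ is a directed algebra.
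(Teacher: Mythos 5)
The paper does not actually prove this statement: it is quoted directly as Happel's Corollary~3.6 in \cite{Ha}, and no argument is supplied. So there is no ``paper's own proof'' to compare your attempt against; I will instead assess your proof on its own terms.

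Your inductive scheme is plausible and is almost certainly not the route Happel takes (his book works in the derived category, and the directedness of iterated tilted algebras of Dynkin type is extracted there from the structure of $D^b(k\Delta)$ and the Auslander--Reiten triangles, not from a step-by-step Brenner--Butler transport). Your inductive step is essentially sound: on a cycle $M_0\to M_1\to\cdots\to M_t=M_0$ of nonzero non-isomorphisms, the vanishing $\operatorname{Hom}(\T(T^{(i)}),\F(T^{(i)}))=0$ plus the return to $M_0$ really does force all $M_j$ to lie in a single class, and the Brenner--Butler equivalence $\T(T^{(i)})\simeq\Y(T^{(i)})$ (resp.\ $\F(T^{(i)})\simeq\X(T^{(i)})$), being a genuine equivalence of full subcategories, carries the cycle to a cycle of nonzero non-isomorphisms between indecomposables in $\bmod A_{i+1}$. (The aside about $\F$ being closed under submodules is unnecessary here.)

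There is, however, a genuine gap at the foundation. You write ``so by definition there is a sequence $A=A_0,\dots,A_m=k\Delta$ with $\Delta$ a Dynkin quiver,'' but the paper's definition in Section~\ref{uno-ocho} only requires $\Delta$ to be a finite connected quiver without oriented cycles. The Dynkin hypothesis is not part of the definition; it must be \emph{derived} from the hypothesis that $A$ is representation-finite, and that derivation is nontrivial. Note that your downward induction gives no help: the implication ``$T$ separating and $\operatorname{End}_A(T)$ representation-finite $\Rightarrow$ $A$ representation-finite'' runs in the wrong direction, so knowing $A_0$ is representation-finite does not tell you anything about $A_m=k\Delta$ a priori. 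Without the Dynkin conclusion, $k\Delta$ could be representation-infinite and your base case fails outright. Closing this gap requires an independent argument (for instance, that iterated tilted algebras are piecewise hereditary, that derived equivalence preserves the Euler form, and that a representation-finite piecewise hereditary algebra has positive-definite Euler form, forcing $\Delta$ to be Dynkin); none of this is in your proposal. A secondary, more cosmetic point: invoking Proposition~\ref{gamaHlong} (``component with length'') does not by itself give that $\Gamma_H$ is directed --- ``with length'' is a statement about parallel paths, not about the absence of cycles of nonzero non-isomorphisms. The cleaner justification for the base case is the one you also mention, that for a representation-finite hereditary algebra every indecomposable is preprojective and the preprojective component carries no cycles.
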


Recall that an algebra $A$ is standard if the
category $\mbiA$ is equivalent to the mesh category of $\Gamma_A$, see \cite[2.5]{ChaTre}. 
It is known by \cite{BrG} that a directed algebra is standard.

Following \cite{ChaTre}, by a \emph{bypass} of an irreducible morphism $f:X\fle Y$,
with $X$ and $Y$ indecomposable modules,  we mean a path
$X \stackrel{t_1}\longrightarrow Y_1\stackrel{t_2}\longrightarrow
 Y_2 \longrightarrow \cdots \longrightarrow Y_n
\stackrel{t_{n+1}}\longrightarrow Y$ in $\mbiA$ of length $n+1 \geq 2$, where $t_1$ and $t_{n+1}$
 are irreducible morphisms,
$X \ncong Y_n$ and $Y\ncong Y_1$.

In \cite[Corollary 2.9]{ChaTre}, the authors proved that if we consider a standard algebra $A$ and a nonzero composition of $n$ irreducible
morphisms between indecomposable modules in $\Re_A^{n+1}$, then there exists a bypass or a cycle in $\Gamma_A$.  Since,
by \cite{S}, there are no bypasses in a
directed algebra, then we have the following result.  

\begin{lema}\emph{\cite[Corollary 2.11]{ChaTre}}\label{ctdir}
Let A be a directed algebra. Let $h_i:X_i\fle X_{i+1}$ be irreducible morphisms between indecomposable modules,
for $1\leq i\leq n$. Then $h_n \dots h_1\in \Re_A^{n+1}(X_1,X_{n+1})$
if and only if $h_n \dots h_1=0.$
\end{lema}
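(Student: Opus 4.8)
The plan is to prove Lemma \ref{ctdir} by combining the standardness of directed algebras with the absence of bypasses. First I would recall that, by Proposition \ref{Hap} (or rather by the setting), a directed algebra $A$ is standard thanks to \cite{BrG}, so that $\mbiA$ is equivalent to the mesh category of $\Gamma_A$; this will let me translate statements about compositions of irreducible morphisms into combinatorial statements about paths in $\Gamma_A$ modulo mesh relations.

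The backward implication is trivial: if $h_n\cdots h_1=0$, then certainly $h_n\cdots h_1\in\Re_A^{n+1}(X_1,X_{n+1})$, since the zero morphism lies in every power of the radical. So the content is the forward direction. Suppose $h_n\cdots h_1\in\Re_A^{n+1}(X_1,X_{n+1})$ and suppose, for contradiction, that $h_n\cdots h_1\neq 0$. Then I have a nonzero composition of $n$ irreducible morphisms between indecomposable modules lying in $\Re_A^{n+1}$. By \cite[Corollary 2.9]{ChaTre}, applied to the standard algebra $A$, this forces the existence either of a bypass of one of the irreducible morphisms $h_i$ in $\Gamma_A$, or of a cycle in $\Gamma_A$. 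Since $A$ is directed, $\Gamma_A$ contains no cycles (no indecomposable module lies on a cycle in $\mbiA$, and in particular there is no oriented cycle of irreducible morphisms), and by \cite{S} a directed algebra has no bypasses. Either alternative contradicts the directedness of $A$, so we conclude $h_n\cdots h_1=0$.

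The main obstacle, such as it is, is bookkeeping rather than mathematics: one must make sure that the hypotheses of \cite[Corollary 2.9]{ChaTre} are literally met, namely that we genuinely have a \emph{nonzero} composition of $n$ irreducible morphisms sitting in $\Re_A^{n+1}$ (not merely in $\Re_A^{n}\setminus\Re_A^{n+1}$), and that ``directed'' in our sense is the same notion for which \cite{S} rules out bypasses and which guarantees no cycles in $\Gamma_A$. Once these identifications are in place the argument is immediate. No computation is required; the lemma is essentially a repackaging of \cite[Corollary 2.9]{ChaTre} together with the two structural facts that directed algebras are standard and bypass-free.
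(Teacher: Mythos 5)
Your proposal reproduces exactly the argument the paper sketches in the two paragraphs preceding the lemma: directed implies standard by \cite{BrG}, then \cite[Corollary 2.9]{ChaTre} forces a bypass or cycle from a nonzero composite in $\Re_A^{n+1}$, and both are excluded by directedness and \cite{S}. This is the paper's approach, correctly carried out.
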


Our next result is an immediately consequence of Proposition \ref{Hap} and   Lemma \ref{ctdir}.

\begin{prop}
Let $\Delta$ be a Dynkin quiver and $A$ be an iterated tilted algebra of type ${\Delta}$.
Let $h_i:X_i\fle X_{i+1}$ be irreducible morphisms between indecomposable modules,
for $1\leq i\leq n$. Then $h_n \dots h_1\in \Re_A^{n+1}(X_1,X_{n+1})$
if and only if $h_n \dots h_1=0.$
\end{prop}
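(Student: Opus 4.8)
The plan is to reduce the statement directly to Lemma \ref{ctdir} by invoking the structural results already available for iterated tilted algebras of Dynkin type. The key observation is that such an algebra is automatically representation-finite: since $\Delta$ is a Dynkin quiver, $k\Delta$ is representation-finite, and by the construction of an iterated tilted algebra through a sequence of separating tilting modules, representation-finiteness is preserved at each step (as recalled in the preliminaries, if $T$ is a separating tilting $A$-module and $B=\mathrm{End}_A(T)$ is representation-finite then so is $A$; applied backwards along the sequence $A=A_0,\dots,A_m=k\Delta$, each $A_i$ is representation-finite, in particular $A=A_0$).

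Next I would apply Proposition \ref{Hap}: a representation-finite iterated tilted algebra is a directed algebra. Thus $A$ is directed, and the hypotheses of Lemma \ref{ctdir} are met verbatim. Applying that lemma to the given irreducible morphisms $h_i:X_i\to X_{i+1}$, $1\le i\le n$, yields that $h_n\cdots h_1\in\Re_A^{n+1}(X_1,X_{n+1})$ if and only if $h_n\cdots h_1=0$, which is exactly the assertion to be proved.

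There is essentially no obstacle here; the only point requiring a word of care is the verification that $A$ is representation-finite, since Proposition \ref{Hap} (Happel's Corollary 3.6) is stated for representation-finite iterated tilted algebras. But for Dynkin type this is immediate from the definition in \ref{uno-ocho} together with the preservation of representation-finiteness under the functors of the Brenner--Butler theorem, as noted above. Hence the proof is a two-line citation argument.

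\begin{proof}
Since $\Delta$ is a Dynkin quiver, $k\Delta$ is representation-finite. By definition, there is a sequence of algebras $A=A_0,A_1,\dots,A_m=k\Delta$ and separating $A_i$-tilting modules $T^{(i)}$ with $A_{i+1}=\mathrm{End}_{A_i}T^{(i)}$. As recalled in Section 1, if $T$ is a separating tilting $A_i$-module and $A_{i+1}=\mathrm{End}_{A_i}(T)$ is representation-finite, then so is $A_i$. Applying this successively for $i=m-1,m-2,\dots,0$ we conclude that $A$ is representation-finite. By Proposition \ref{Hap}, $A$ is then a directed algebra. The claim now follows at once from Lemma \ref{ctdir} applied to the irreducible morphisms $h_i:X_i\fle X_{i+1}$, $1\le i\le n$.
\end{proof}
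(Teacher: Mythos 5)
Your argument is correct and follows exactly the route the paper takes: the statement is deduced as an immediate consequence of Proposition \ref{Hap} and Lemma \ref{ctdir}, the only point to check being that $A$ is representation-finite, which you handle correctly by tracing representation-finiteness backwards along the sequence $A=A_0,\dots,A_m=k\Delta$ using the fact that separating tilting preserves it. In fact you spell out the representation-finiteness step in more detail than the paper, which simply calls the result an immediate consequence.
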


\end{document}